\documentclass[10pt]{amsart}
\usepackage[run=2]{auto-pst-pdf}
\usepackage{amsmath}
\usepackage{amssymb}
\usepackage{amscd}
\usepackage{comment}
\usepackage{url}
\usepackage[all]{xypic}
\usepackage{tikz}
\usepackage{graphicx}

\newcommand{\bp}{\begin{pmatrix}}
\newcommand{\ep}{\end{pmatrix}}
\newcommand{\be}{\begin{equation}}
\newcommand{\ee}{\end{equation}}
\newcommand{\ol}[1]{\overline{#1}}
\newcommand{\e}{\varepsilon}

\numberwithin{equation}{section}

\theoremstyle{plain}
\newtheorem{theorem}[equation]{Theorem}
\newtheorem*{theorem*}{Theorem}
\newtheorem{lemma}[equation]{Lemma}
\newtheorem{proposition}[equation]{Proposition}

\newtheorem{corollary}[equation]{Corollary}

\theoremstyle{definition}
\newtheorem{example}[equation]{Example}

\newtheorem{definition}[equation]{Definition}

\theoremstyle{remark}
\newtheorem{remark}[equation]{Remark}

\numberwithin{equation}{section}

\newtheorem*{ack}{Acknowledgements}
\newcommand{\intfrac}[2]{\genfrac{\lfloor}{\rfloor}{}{1}{#1}{#2}}



\def\Z{\mathbb Z}
\def\R{\mathbb R}
\def\Q{\mathbb Q}
\def\C{\mathbb C}

\def\A{\mathcal{A}}
\def\O{\Omega}
\def\wt#1{\widetilde{#1}}
\def\wC{\wt{C}}
\def\p{\partial}

\def\sm{\setminus}

\def\kc{K_{cen}}
\def\kt{K_{tot}}
\def\kz#1{K_{z_{#1}}}
\def\level{double point count}
\newcommand{\sss}{\ifmmode{{\mathfrak s}}\else{${\mathfrak s}$\ }\fi}
\newcommand{\sst}{\ifmmode{{\mathfrak t}}\else{${\mathfrak t}$\ }\fi}
\def\spinc{Spin$^c$}

\DeclareMathOperator\coker{coker}
\DeclareMathOperator\Hom{Hom}

\title[Deformations of singular points]{Semigroups, 
 $\pmb{d}$--invariants 
 and deformations of cuspidal singular points of plane curves}
\author{Maciej Borodzik}
\address{Institute of Mathematics, University of Warsaw, ul. Banacha 2,
02-097 Warsaw, Poland}
\email{mcboro@mimuw.edu.pl}
\thanks{The first author was supported by  Polish OPUS grant No 2012/05/B/ST1/03195}
\thanks{The second author was supported by
 National Science Foundation   Grant  1007196.}

\author{Charles Livingston}
\address{Department of Mathematics, Indiana University, Bloomington, IN 47405}
\email{livingst@indiana.edu}

\makeatletter
\@namedef{subjclassname@2010}{\textup{2010} Mathematics Subject Classification}
\makeatother
\subjclass[2010]{primary: 14B07, secondary: 14H50, 32S50, 57M25} 
\keywords{cuspidal singularity, $\delta$--constant deformation,  $d$--invariant, surgery, semigroup of singular points}

\begin{document}
\begin{abstract}
We study $\delta$--constant deformations of plane curve singularities from topological point of view. We introduce a notion of a positively self--intersecting
concordance, which is a topological counterpart of a $\delta$--constant deformation in singularity theory. We then apply methods from
the Heegaard--Floer theory. As a main result we give a purely topological proof of the semicontinuity property for semigroups of singular points
of plane curves under $\delta$--constant deformation. 
Using the same approach we give also a knot--theoretical result concerning minimal unknotting sequences of torus knots.\end{abstract}
\maketitle

\section{Cuspidal singular points and $\delta$--constant deformations}\label{sec:first}

\subsection{Introduction}
Let $K_0$ and $K_1$ be knots in $S^3$. A \emph{positively self--intersecting concordance} from $K_0$ to $K_1$ 
is a smoothly immersed annulus $A\subset S^3\times[0,1]$ with boundary $K_1\times\{1\}\sqcup -K_0\times\{0\}$. The annulus is only allowed
to have standard double points as singularities, and the intersection index
of the two branches of $A$ at each double point must be $+1$. The \emph{double point count} of a positively self--intersecting concordance is the number $p(A)$
of the double points of the annulus $A$.

From a knot theoretical point of view, the notion of a positively self--intersecting concordance encompasses two phenomena:  standard concordance and unknotting moves. For example, one can prove that a positively self--intersecting concordance from $K_0$ to $K_1$ with a double point
count one exists if and only if there exist knots $K_0'$ and $K_1'$ such that $K_0$ is concordant to $K_0'$; $K_1$ is concordant to $K_1'$; and
$K_1'$ is obtained from $K_0'$ by changing one negative crossing to a positive.

The main motivation for introducing the notion of a positively self--intersecting concordance comes from singularity theory. As we show in detail
in Section~\ref{sec:psic}, a $\delta$--constant deformation (see Definition~\ref{def:deltaconst}) gives rise to a positively self--intersecting concordance
between the corresponding links of singular points (in our applications, these links have one component, so they are actually knots). 
The double point count in this case is easily seen to be 
the difference of the three--genera of these knots. The topological
notion of a positively
self--intersecting concordance seems to be a natural topological counterpart of the algebraic notion of the $\delta$--constant deformation.

Coming back to topology, starting from a positively self--intersecting concordance from a knot $K_0$ to a knot $K_1$, with double point count $p$, we can construct, for
any integer $q$, a smooth four manifold $W_q$.  This manifold is a cobordism between  $(q+4p)$--surgery on $K_1$, which we denote $S^3_{q+4p}(K_1)$,  and $q$--surgery on $K_0$, $S^3_q(K_0)$.   Moreover, if $q(q+4p)>0$, then
the intersection form on $H_2(W_q)$ is negative definite. This means that we can apply Heegaard--Floer theory   to compare corresponding $d$--invariants
of $S^3_{q+4p}(K_1)$ and $S^3_{q}(K_0)$ to actually obstruct the existence of a positively self--intersecting concordance from $K_0$ to $K_1$
with double point count $p$.
The $d$--invariants of surgeries on knots are usually hard to compute, but if $K_0$ and $K_1$ are algebraic knots   and $q$ is a large positive number, then these $d$--invariants can be explicitly expressed in terms of the coefficients
of the Alexander polynomials of $K_0$ and $K_1$.  (Similarly, computations are simplified for  so called $L$--space knots, that is,
knots that admit a positive surgery which
is an $L$--space; but we will not need the general theory of $L$--spaces and $L$--space knots here.)
The computations are
at the beginning quite involved, but a remarkable simplification occurs, leading to the following result, which is the main technical result
of the present article.

\begin{theorem*}[Theorem~\ref{prop:dinv}]
Suppose $K_0$ and $K_1$ are connected sums of algebraic knots and there exists a positively self--intersecting concordance from $K_0$ to $K_1$
with \level~$p$. Then for any $m\in\Z$ we have
\[I_{{K_1}}(m+g(K_1)+p)\le I_{K_0}(m+g(K_0)),\]
where $I$ is the gap function defined in Section~\ref{sec:surg} and $g(K)$ denotes the three genus of the knot $K$.
\end{theorem*}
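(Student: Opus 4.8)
The plan is to turn the positively self--intersecting concordance into a negative definite spin$^c$ cobordism, feed it into the Ozsv\'ath--Szab\'o inequality that bounds $d$--invariants across such a cobordism, and then substitute the known formulas for the $d$--invariants of large surgeries on connected sums of algebraic knots in terms of the gap function $I$. After optimizing over the relevant family of spin$^c$ structures, the two ends of the cobordism should contribute exactly $I_{K_1}(m+g(K_1)+p)$ and $I_{K_0}(m+g(K_0))$, with the shift by $g(K_1)+p$ coming out of the surgery arithmetic.

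First I would build the cobordism $W_q$ promised in the introduction. Blowing up each of the $p$ positive double points of the immersed annulus $A\subset S^3\times[0,1]$ produces an embedded annulus $A'$ inside $X:=(S^3\times[0,1])\#\,p\,\ol{\CP^2}$, with $[A']$ equal to the total transform of $[A]$ minus $2\sum_{i=1}^p E_i$; since each blow--up drops a self--intersection by $4$, the normal framing of $A'$ differs from the product framing by $4p$, which is the source of the coefficient $q+4p$. Doing $q$--framed surgery along $K_0\times\{0\}$ and using $A'$ to carry a $2$--handle to the other end yields $W_q$ with $\partial W_q = S^3_{q+4p}(K_1)\sqcup -S^3_q(K_0)$; over $\Q$, $H_2(W_q)$ is spanned by the exceptional classes (each of square $-1$) and one further class whose self--intersection has the sign of $q(q+4p)$, so $W_q$ is negative definite precisely when $q(q+4p)>0$. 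From now on I fix $q$ to be a large positive integer, so that both surgeries are ``large'' in the sense of Section~\ref{sec:surg}.

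Next I would invoke the inequality: for a negative definite cobordism $W$ between rational homology spheres with $b_1=0$ and any $\mathfrak t\in\mathrm{Spin}^c(W)$,
\[ \frac{c_1(\mathfrak t)^2+b_2(W)}{4}\ \le\ d\bigl(\partial_+W,\mathfrak t\bigr)-d\bigl(\partial_-W,\mathfrak t\bigr). \]
Applied to $W_q$, the spin$^c$ structures are pinned down by their evaluations on the $E_i$ (which I take to be $\pm1$) together with the restriction to $S^3_q(K_0)$; letting the latter run through the structures indexed, via the usual $\Z/q$ identification, near $m+g(K_0)$, the restriction to $S^3_{q+4p}(K_1)$ is forced to the one indexed near $m+g(K_1)+p$. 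One then maximizes $c_1(\mathfrak t)^2$ over this family. Finally, using the formula $d(S^3_N(K),i)=d(S^3_N(U),i)-2H_K(i)$ valid for $N\gg0$, where $H_K$ is the gap function up to reindexing and $H_{K\#K'}(j)=\min_{j'+j''=j}\bigl(H_K(j')+H_{K'}(j'')\bigr)$ as set up in Section~\ref{sec:surg}, the lens--space terms $d(S^3_N(U),\cdot)$ together with the purely combinatorial $c_1^2$ contributions cancel --- this is the ``remarkable simplification'' --- and the surviving inequality is exactly $I_{K_1}(m+g(K_1)+p)\le I_{K_0}(m+g(K_0))$. Since $m$ was arbitrary and $q$ can be made arbitrarily large, nothing is lost.

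The main obstacle is the spin$^c$ bookkeeping together with this cancellation: one has to choose the family on $W_q$ so that the restrictions to the two ends really do track the \emph{same} integer $m$ after the genus and framing shifts, compute $c_1(\mathfrak t)^2$ on the nose rather than up to an additive constant, and check that every term not of the form $I(\cdot)$ drops out. Reconciling the grading conventions and the $\Z/q$ versus $\Z/(q+4p)$ identifications, and confirming that the large--surgery $d$--invariant formula genuinely applies to connected sums of algebraic knots --- via the tensor product of knot Floer complexes, even though such connected sums are not themselves $L$--space knots --- is where the real effort goes; constructing $W_q$ and quoting the negative definite inequality are comparatively routine.
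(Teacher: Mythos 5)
Your proposal follows essentially the same route as the paper: blow up the $p$ positive double points to obtain the negative definite cobordism $W_q$ between $S^3_{q+4p}(K_1)$ and $S^3_q(K_0)$, apply the Ozsv\'ath--Szab\'o $d$--invariant inequality (correctly normalized, since $-3\sigma(W_q)-2\chi(W_q)=p=b_2(W_q)$) for a characteristic spin$^c$ structure whose two restrictions are indexed by $m+g(K_0)$ and $m+g(K_1)+p$, and substitute the large--surgery formula $d=-2I(\cdot)+(\text{lens space term})$ with the infimum convolution handling connected sums. The computation you defer --- that the extremal characteristic vector is $r\beta_0$ with $c_1^2=-r^2p/(q(q+4p))$ for $q=4m+r$, after which every term not of the form $I(\cdot)$ cancels --- is exactly what the paper carries out, and it closes as you predict.
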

\begin{remark}
The result can be easily extended for all connected sums of $L$--space knots with essentially the same proof, 
but the meaning of the gap function is  less geometric and therefore we focus on algebraic knots.
\end{remark}
From this result we draw two consequences. The first one deals with unknotting sequences of torus knots. 
To the best of our knowledge this is the first known proof of this result. The statement can be seen as 
a topological counterpart of the semicontinuity
of the multiplicity of a singular point in singularity theory. 

\begin{theorem*}[On minimal unknotting sequences of torus knots, Theorem~\ref{thm:unknot}]
If  the  torus knot $T(a,b)$ with $0< a<b$ appears in some minimal unknotting sequence for the torus knot $T(c,d)$  with $0< c<d$, then $a \le c$.
\end{theorem*}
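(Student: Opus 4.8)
The plan is to turn a minimal unknotting sequence for $T(c,d)$ into a positively self--intersecting concordance, feed it into Theorem~\ref{prop:dinv}, and then read off the two multiplicities from the resulting inequality between gap functions. Fix a minimal unknotting sequence $T(c,d)=K_0,K_1,\dots,K_n=U$, where $U$ is the unknot and each $K_i$ is obtained from $K_{i-1}$ by a single crossing change; by the resolution of the Milnor conjecture, $n=u\bigl(T(c,d)\bigr)=g\bigl(T(c,d)\bigr)$. Assume $K_j=T(a,b)$ for some index $j$.

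The first and, I expect, most substantial step is to show that \emph{every} crossing change in this sequence replaces a positive crossing by a negative one, and that the smooth four--genus drops by exactly one at each step. For this I would use the Ozsv\'ath--Szab\'o invariant $\tau$: one has $\tau\bigl(T(c,d)\bigr)=g\bigl(T(c,d)\bigr)=n$ and $\tau(U)=0$; a single crossing change alters $\tau$ by at most $1$; and a positive--to--negative change can only preserve or decrease $\tau$, while a negative--to--positive change can only preserve or increase it. Since $\tau$ must decrease by $n$ over the $n$ steps, each step must be positive--to--negative with $\tau(K_i)=\tau(K_{i-1})-1$; in particular $\tau(K_j)=n-j$. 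Combining $g_4(K_j)\ge\tau(K_j)=n-j$ with the bound $g_4(K_j)\le g_4(U)+(n-j)=n-j$ (from the $n-j$ crossing changes still to come) gives $g_4(K_j)=n-j$, and since $T(a,b)$ is a positive knot its three--genus equals its smooth four--genus, so $g\bigl(T(a,b)\bigr)=n-j$ and hence $j=g\bigl(T(c,d)\bigr)-g\bigl(T(a,b)\bigr)\ge 0$.

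Reading the first $j$ steps $K_0\to\dots\to K_j$ in reverse, each now replaces a negative crossing by a positive one, so (as recalled in the introduction) each gives a positively self--intersecting concordance with one double point from $K_i$ to $K_{i-1}$; stacking these produces a positively self--intersecting concordance from $T(a,b)$ to $T(c,d)$ with \level~$p:=j=g\bigl(T(c,d)\bigr)-g\bigl(T(a,b)\bigr)$. Applying Theorem~\ref{prop:dinv} with $K_0=T(a,b)$ and $K_1=T(c,d)$, and writing $g_1=g\bigl(T(c,d)\bigr)$, $g_0=g\bigl(T(a,b)\bigr)$ (so $g_1+p=2g_1-g_0$), we obtain $I_{T(c,d)}(m+2g_1-g_0)\le I_{T(a,b)}(m+g_0)$ for all $m\in\Z$, i.e.\ $I_{T(c,d)}(k+2p)\le I_{T(a,b)}(k)$ for all $k$ after the substitution $k=m+g_0$. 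Now I would use the standard semigroup description of the gap function of an algebraic knot, $I_K(n)=g(K)-n+\#\bigl(S_K\cap[0,n)\bigr)$, together with the symmetry of the semigroup of a plane curve singularity, $\#\bigl(S_K\cap[0,n)\bigr)=n-g(K)+\#\bigl(S_K\cap[0,2g(K)-n)\bigr)$ for all $n$: since $2g_1-2p=2g_0$, both sides collapse and the inequality becomes
\[\#\bigl(S_{T(c,d)}\cap[0,n)\bigr)\le\#\bigl(S_{T(a,b)}\cap[0,n)\bigr)\qquad\text{for all }n\in\Z,\]
which is exactly the semicontinuity of the semigroup for this pair of singular points.

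To finish, evaluate this at $n=a$. Since $a<b$, the least positive element of $S_{T(a,b)}=\langle a,b\rangle$ is $a$, so $S_{T(a,b)}\cap[0,a)=\{0\}$; the inequality then forces $S_{T(c,d)}\cap[0,a)=\{0\}$, which means the least positive element of $S_{T(c,d)}=\langle c,d\rangle$ is at least $a$, that is, $c\ge a$. The delicate points of this plan are the first step — one really needs that the minimal sequence uses \emph{only} positive--to--negative crossing changes, since this is what makes the reversed sub--sequences assemble into a genuine positively self--intersecting concordance rather than a mixture running in both directions — and the realization that extracting the multiplicity from Theorem~\ref{prop:dinv} requires the Gorenstein symmetry of the singularity's semigroup; a single evaluation of the inequality of Theorem~\ref{prop:dinv} does not suffice.
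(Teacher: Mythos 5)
Your proof is correct and follows essentially the same route as the paper: convert the minimal unknotting sequence into a positively self--intersecting concordance from $T(a,b)$ to $T(c,d)$ with double point count $g(T(c,d))-g(T(a,b))$, apply Theorem~\ref{prop:dinv}, and use the symmetry of Lemma~\ref{lem:semiprop}(d) to deduce $\#S_{c,d}\cap[0,m)\le\#S_{a,b}\cap[0,m)$ for all $m$. Your $\tau$--argument merely fills in a detail the paper asserts without proof (that every crossing change in a minimal sequence is positive--to--negative), and your final evaluation at $n=a$ is an equivalent variant of the paper's evaluation at $m=c+1$.
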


\smallskip
The next result is a semicontinuity property for semigroups for $\delta$--constant deformations. This is the main result of the present article.

\begin{theorem*}[Semicontinuity of semigroups, Theorem~\ref{thm:main}]
Assume that there exists a $\delta$--constant deformation of a cuspidal singular point $z_0$ such that the general fiber
has among its singularities cuspidal singular points $z_1,\ldots,z_n$. Let $S_0,S_1,\ldots,S_n$, be the corresponding semigroups.
Then for any $m\ge 0$ and for any nonnegative $m_1,\ldots,m_n$ such that $m_1+\ldots+m_n=m$, we have
\[\# S_0\cap [0,m)\le \sum_{j=1}^n\# S_j\cap [0,m_j).\]
\end{theorem*}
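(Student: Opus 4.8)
The plan is to turn the $\delta$--constant deformation into a positively self--intersecting concordance, apply Theorem~\ref{prop:dinv} to it, and then decode the resulting inequality for the gap function into the semigroup inequality. Write $K_z$ for the link of a singular point $z$, put $K:=K_{z_1}\#\cdots\#K_{z_n}$ and $K_0:=K_{z_0}$, and recall that a cuspidal singular point is locally irreducible, so that $g(K_z)=\delta(z)$ for each of $z_0,z_1,\dots,z_n$; hence $g(K)=\sum_{j=1}^n\delta(z_j)$ and $g(K_0)=\delta(z_0)$. By the construction of Section~\ref{sec:psic}, the deformation yields a positively self--intersecting concordance from $K$ to $K_0$ whose double point count is the difference of the three--genera of the two links; and since $\delta$--constancy gives $\delta(z_0)\ge\sum_{j=1}^n\delta(z_j)$, that count is $p:=g(K_0)-g(K)\ge 0$. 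If the general fiber has additional singular points, the construction attaches further connected summands to the source and/or enlarges $p$; I return below to why this is harmless.

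Applying Theorem~\ref{prop:dinv} to this concordance gives, for every $m\in\Z$,
\[ I_{K_0}(m+g(K_0)+p)\le I_{K}(m+g(K)). \]
Substituting $p=g(K_0)-g(K)$ and then replacing $m$ by $m-g(K)$ converts the left argument into $m+2p$ and the right argument into $m$, so $I_{K_0}(m+2p)\le I_{K}(m)$ for all $m\in\Z$. The gap function of an algebraic knot is non--decreasing (Section~\ref{sec:surg}) and $p\ge 0$, hence $I_{K_0}(m)\le I_{K_0}(m+2p)\le I_{K}(m)$; that is, $I_{K_0}\le I_K$ pointwise on $\Z$.

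To conclude I would invoke the semigroup description of the gap function from Section~\ref{sec:surg}: for an algebraic knot, the gap function is $m\mapsto\# S\cap[0,m)$ where $S$ is its semigroup, while for the connected sum $K=K_{z_1}\#\cdots\#K_{z_n}$ the gap function is the infimal convolution
\[ I_K(m)=\min\Bigl\{\,\sum_{j=1}^n\# S_j\cap[0,m_j)\ :\ \sum_{j=1}^n m_j=m\,\Bigr\}. \]
Evaluating the inequality $I_{K_0}\le I_K$ at a fixed $m\ge 0$ and taking, inside this minimum, the prescribed partition $m=m_1+\cdots+m_n$ with all $m_j\ge 0$ gives exactly $\# S_0\cap[0,m)\le\sum_{j=1}^n\# S_j\cap[0,m_j)$, which is the assertion. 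If $K$ carries extra connected summands coming from additional singular points of the general fiber, one still has $I_K\le I_{K_{z_1}\#\cdots\#K_{z_n}}$: in the infimal convolution, put argument $0$ on each extra summand, where its gap function vanishes. The same conclusion then follows.

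I expect the substantial content to lie in the inputs rather than in this argument: the construction of the concordance and the identification of its double point count with $g(K_0)-g(K)$ in Section~\ref{sec:psic} --- in particular the accounting for non--cuspidal singularities of the general fiber --- and the infimal convolution formula for the gap function of a connected sum in Section~\ref{sec:surg}. The latter is precisely the step that upgrades the single pointwise inequality $I_{K_0}\le I_K$ to one inequality for every partition of $m$; granting it and the geometric construction, the proof of the theorem is the genus--shift bookkeeping displayed above together with a one--line unwinding of definitions.
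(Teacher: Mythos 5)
Your overall strategy is the paper's: build the positively self--intersecting concordance from the $\delta$--constant deformation (Lemma~\ref{lem:ktpsikc}), feed it into Theorem~\ref{prop:dinv}, split the gap function of the connected sum by the infimal convolution, and translate into semigroup counts. However, two of your intermediate claims are false as stated. First, the gap function $I(m)=\#\{x\in\Z:\ x\ge m,\ x\notin S\}$ is \emph{non-increasing} in $m$, not non-decreasing, so the chain $I_{K_0}(m)\le I_{K_0}(m+2p)\le I_K(m)$ runs the wrong way; and indeed the conclusion you draw from it, ``$I_{K_0}\le I_K$ pointwise on $\Z$,'' is false in general: for $m\ll 0$ one has $I_{K_0}(m)=g(K_0)$ and $I_K(m)=g(K)$, and $g(K_0)>g(K)$ whenever $p>0$. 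Second, the gap function of an algebraic knot is not $m\mapsto\#S\cap[0,m)$; the correct dictionary is $I(-m+2g)=\#S\cap[0,m)$, which uses the symmetry of Lemma~\ref{lem:semiprop}(d) and reverses the direction of the argument.

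These two errors compensate each other, which is why your final inequality is nevertheless the right one, and the fix is short: starting from your (correct) inequality $I_{K_0}(m+2p)\le I_K(m)$, do not try to remove the shift by monotonicity. Instead evaluate at $m=-m'+2g(K)$; since $2p=2g(K_0)-2g(K)$, the left argument becomes $-m'+2g(K_0)$ and the right becomes $-m'+2g(K)$, giving
\[ I_{K_0}\bigl(-m'+2g(K_0)\bigr)\ \le\ I_K\bigl(-m'+2g(K)\bigr). \]
Now the infimal convolution bounds the right side by $\sum_j I_{K_j}(-m_j+2g(K_j))$ for any partition $m'=\sum m_j$ (using $g(K)=\sum g(K_j)$), and Lemma~\ref{lem:semiprop}(d) converts both sides into the semigroup counts $\#S_0\cap[0,m')$ and $\#S_j\cap[0,m_j)$. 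This is exactly the substitution the paper performs. Your remark about extra singular points of the general fiber is unnecessary: Lemma~\ref{lem:ktpsikc} is stated for ``some (not necessarily all)'' of the unibranched singular points, so one simply takes the connected sum over $z_1,\dots,z_n$ only, and the double point count $\delta_0-\sum_{j=1}^n\delta_j$ is still nonnegative by Lemma~\ref{lem:sumofdelta}.
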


Theorem~\ref{thm:main} can be regarded as a generalization of semicontinuity of the multiplicity, a result  first proved by Gorsky and N\'emethi in \cite{GN}. They did not make an assumption that the deformation is $\delta$--constant,
but they proved the result only for $n=1$. In Section~\ref{sec:many} we will present examples of cases not covered by Gorsky and N\'emethi.  On the other hand,
it is almost certain that there exists a proof of the semicontinuity property for many singular points (that is, $n$ can be arbitrary) 
and for general, not only $\delta$--constant,
deformations.  
What is new and most important about our proof is that  it works in a smooth category, that is, this is a topological proof; 
 Gorsky and N\'emethi prove their result
using tools from algebraic geometry which are not easily translated into the topological world. 
In our proof, we identify the key topological properties entailed by the existence of a $\delta$--constant deformation and then proceed using tools solely from smooth topology.

In particular we never rely on the fact that the positively self--intersecting concordance is realized by a complex algebraic annulus. Clearly, the
condition that all the double points of an annulus are positive is automatically satisfied if the annulus has a complex structure; but the existence of a complex
structure is definitely a stronger condition that the assumption of only positivity of intersections.

On the other hand, the smooth structure of the positively self--intersecting concordance is really used. 
As we discuss in Remark~\ref{rem:smooth}, Theorem~\ref{prop:dinv} is in general false if we replace
`smooth' by `topological locally flat' in the definition of the positively self--intersecting concordance. To the best knowledge of the authors,
the semigroup semicontinuity is the first obstruction for deformations of singular points of plane curves that works precisely in the smooth category.

The structure of the paper is the following. First, in Section~\ref{sec:first}, 
we recall various notions  from topology and from singularity theory. Experts might find this material to be elementary, but we want
this article to be accessible for specialists   in topology as well as for specialists  in singularity theory. 

Next we move to positively self--intersecting concordances in Section~\ref{sec:positively}. We provide a construction of the manifold $W_q$ mentioned above
and study its properties. In particular, we prove Theorem~\ref{prop:dinv} and Theorem~\ref{thm:main}. In Section~\ref{sec:baader} we
prove Theorem~\ref{thm:unknot}.

Section~\ref{examplesection} contains examples of application of Theorem~\ref{thm:main}. We compare our criterion with other criteria from
deformation theory, especially with semicontinuity of the spectrum.

\begin{ack}
We are grateful to Sebastian Baader, Eugene Gorsky, Andr\'as N\'emethi and Peter Ozsv\'ath 
for fruitful discussions. We would like also to thank to Javier Fernandez de Bobadilla
for his interest in our work and for spotting a misprint in the main theorem. The first author would like to thank   Indiana University
for its hospitality.
\end{ack}

\subsection{Genus and normalization}\label{sec:genus}

We recall the notion of  the genus of a singular curve. The general theory can be found in \cite[Section II.8, Applications]{Hart}; 
we will present  the  necessary background.

For a real two-dimensional compact surface $\Sigma$ with boundary, we define the genus by the formula
\begin{equation}\label{eq:genus}
2b_0(\Sigma)-2g(\Sigma)=\chi(\Sigma)+b_0(\partial\Sigma),
\end{equation}
where $b_0$ stands for the Betti number. 

Suppose $C$ is an algebraic curve.

\begin{lemma}[see \expandafter{\cite[Section I.1.9]{GLS} or \cite[I, Exercise 3.17]{Hart}}]
There exists a Riemann surface $\Sigma$, a holomorphic surjective
map $\phi\colon\Sigma\to C$, and a finite set $A\subset\Sigma$, such that $\phi$ restricted
to $\Sigma\setminus A$ is one-to-one. 
\end{lemma}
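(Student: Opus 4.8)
The plan is to establish the classical existence of a normalization (the one--dimensional case of resolution of singularities) by combining a local analytic construction with a global gluing argument. First I would reduce to the projective, irreducible case: embed $C$ in some projective space, take its closure $\ol{C}$, and split into irreducible components; a normalization of $\ol{C}$ restricts over $C$ to a normalization of $C$, and the disjoint union of normalizations of the components normalizes the whole curve. So it suffices to treat $C$ projective and irreducible.

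Next, the local model near a point $p\in C$. The branches of $C$ at $p$ are uniformized, via the Puiseux (Newton) parametrization, by holomorphic maps $\gamma\colon\Delta\to C$ from a disk which are injective for $t\neq 0$, whose image is the branch, and which are biholomorphic onto the branch away from $p$; equivalently, the integral closure of $\mathcal O_{C,p}$ in $\C(C)$ is a semilocal ring that is a finite product of discrete valuation rings, one per branch. Globally I would let $\Sigma$ be the nonsingular curve whose affine coordinate rings are the integral closures $B$ of the affine coordinate rings of $C$ in the function field $\C(C)$. The essential input is the finiteness theorem: $B$ is a finitely generated module over the coordinate ring of $C$. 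Granting this, one obtains a finite morphism $\phi\colon\Sigma\to C$; finiteness gives properness, hence surjectivity (the image is closed and dense), $\Sigma$ is a nonsingular projective curve over $\C$, and therefore a compact Riemann surface by the standard equivalence between smooth projective curves over $\C$ and compact Riemann surfaces.

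Finally, injectivity off a finite set: $\phi$ induces the identity on $\C(C)$, so it is birational, hence an isomorphism over a dense open $U\subseteq C$; one can in fact take $U$ to be the smooth locus $C_{\mathrm{sm}}$, whose complement is the finite set of singular points of $C$. Setting $A=\phi^{-1}(C\setminus U)$, which is finite since $\phi$ is finite, the restriction $\phi|_{\Sigma\setminus A}$ is one-to-one (indeed a biholomorphism onto $C_{\mathrm{sm}}$). The main obstacle is the finiteness of the integral closure $B$; everything else is formal once it is granted. I would either cite the standard commutative--algebra statement (Noether normalization together with finiteness of integral closure for a domain finitely generated over a field, which in characteristic zero is automatic from separability), or avoid it entirely in the analytic category by directly gluing the finitely many local disk uniformizations of the branches to the identification of $\Sigma\setminus A$ with $C_{\mathrm{sm}}$ and checking that the resulting surface is compact.
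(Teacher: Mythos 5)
Your outline is correct and is precisely the standard normalization argument that the paper itself does not reproduce but delegates to its citations: the integral--closure construction of \cite[I, Exercise 3.17]{Hart} for the global finite birational morphism, and the Puiseux/local--parametrization picture of \cite[Section I.1.9]{GLS} for the branches. The one load--bearing input you flag, finiteness of the integral closure, is indeed the only nontrivial point, and citing it (or replacing it by the analytic gluing of branch uniformizations onto the smooth locus) is exactly what the referenced sources do.
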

The surface $\Sigma$ is called the \emph{normalization} of $C$ and the map $\phi$ is called the \emph{normalization map}.
The pair $(\Sigma,\phi)$ is uniquely defined up to biholomorphism. The main property of $\phi$ is that for any singular point $z\in C$ with $k$ branches,
the inverse image under $\phi$ of a neighborhood of $z$ is a disjoint union of $k$ disks. One can mimic the normalization of plane algebraic curves
in the  smooth or topological category as well.

\begin{definition}\label{genusdef}
For an algebraic curve $C$, the \emph{genus} of $g(C)$ is defined as the genus of $\Sigma$.
\end{definition}
In algebraic geometry, $g(C)$ is often called the geometric genus and is sometimes denoted $p_g(C)$, for example in \cite{Hart}.

\subsection{Cuspidal singularities and their semigroups}\label{sec:semigroups}
We now  review the  necessary background of cuspidal singularities and semigroups. Good references include~\cite{BK,EN,GLS,Wa}.
Here and in what follows, $B(z,r)$  shall denote a ball in $\C^2$ of
radius $r$ and center $z$.

By an  \emph{isolated plane curve singularity} we mean a pair $(C,z)$, where $C$ is a complex curve in $\C^2$ and $z$ is a point on $C$, such
that $C$ is smooth at all points sufficiently close to $z$, with the exception of $z$ itself. 
For a sufficiently small $r>0$, $C$ intersects the sphere $\partial B(z,r)\subset\C^2$ transversally along
a link $L$ and $C\cap B(z,r)$ is topologically a cone on this link.  The link $L$ will be called the \emph{link of the singularity}.
 
\begin{definition}
A singularity is called \emph{cuspidal} or \emph{unibranched} if $L$ is connected; that is, if $L$ is a knot.
\end{definition}
\begin{remark}
Strictly speaking, in the pair $(C,z)$ above, $C$ should be a \emph{germ} of a complex curve. In that case,  
each time  a geometric construction involves $C$ (for instance, defining the  link of a singularity), one uses a representative of $C$.  
This distinction, while technically important, does not affect the arguments.
\end{remark}

We shall consider singular points up to so-called \emph{topological equivalence}. Two singularities are said to be  \emph{topologically equivalent}
if their links are isotopic. We refer to \cite[Section I.3.4]{GLS} or \cite[Section 8.3, page 414]{BK} for more details concerning this definition.

It is a result going back to Burau and Zariski~\cite{Bu,Za}  that any cuspidal singularity is topologically equivalent to a singularity
which is locally  parametrized by $x=t^p$, $y=t^{q_1}+\ldots+t^{q_n}$, where $p<q_1<\ldots<q_n$, $\gcd(p,q_1,\ldots,q_{k-1})$ does not divide $q_k$, and
$\gcd(p,q_1,\ldots,q_n)=1$. We refer to the data $(p;q_1,\ldots,q_n)$ as the \emph{characteristic sequence}. There is a one-to-one
correspondence between characteristic sequences satisfying constraints as above and topological types of singular points.
In particular, the characteristic sequence determines the link of the singularity.
We refer 
to \cite{EN} for a detailed description.

For a cuspidal singularity $(C,z)$, we consider a normalization of $C\cap B(z,r)$. We can choose it to be  an analytic map $\gamma\colon V\to B(z,r)\subset\C^2$,
where $V$ is an open neighborhood of $0\in\C$, such that $\gamma(0)=z$ and $\gamma$ is a homeomorphism between $V$ and $C\cap B(z,r)$ (in 
\cite[Section 2.3]{Wa} such map is called a \emph{good parametrization}). We can write $\gamma=(\phi,\psi)$ in coordinates of $\C^2$.
Then $\gamma$ induces a map between the rings of power series
\[\gamma^*\colon \C[[x,y]]\to\C[[t]],\]
by the substitution $f(x,y)\mapsto f(\phi(t)-z_1,\psi(t)-z_2)$, where $(z_1,z_2)$ denote the coordinates of the singular point $z$. The map
$\operatorname{ord}\colon \C[[t]]\to\Z_{\ge 0}\cup\{\infty\}$ maps a power series in one variable to its order at $0$ (by convention, if  $f$ vanishes
identically on $C$, its order is defined to be $+\infty$). The set of non-negative integers (excluding infinity) belonging to
the image of $\C[[x,y]]$ under the composition $\operatorname{ord}\circ\gamma^*$,
\begin{equation}\label{eq:G}
S=\operatorname{ord}(\gamma^*(\C[[x,y]])),
\end{equation}
is easily seen to be a semigroup  (see \cite[Section 4.3]{Wa}).

\begin{definition}
The semigroup $S$ is called the semigroup of the cuspidal singularity.
\end{definition}

The next result can be found, for example, in \cite[Chapter 4]{Wa}.
\begin{lemma}\label{lem:semiprop}
Let $S$ be the semigroup of a cuspidal singularity  with   characteristic sequence $(p;q_1,\ldots,q_n)$.
\begin{enumerate}
\item[(a)]   $S$ depends only on the topological equivalence class; in particular, it is determined by the characteristic sequence.
\item[(b)]  $S$ has a minimal generating set with  $n+1$ elements.
\item[(c)]  The cardinality of the set $\Z_{\ge 0}\sm S$ is equal to $\mu/2$, where $\mu$ is the Milnor number of the singularity (for a cuspidal singularity
the Milnor number is equal to twice the three genus of the link of the singularity).
\item[(d)]  The maximal element $k$ which does not belong to the semigroup is $\mu-1$. Furthermore, for any $j=0,\ldots,\mu-1$, exactly
one of the values $j$ or $\mu-j-1$ belongs to the semigroup.
\end{enumerate}
\end{lemma}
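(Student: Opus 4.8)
The final statement to prove is Lemma~\ref{lem:semiprop}, the four standard properties (a)--(d) of the semigroup $S$ of a cuspidal singularity with characteristic sequence $(p;q_1,\ldots,q_n)$. My plan is to reduce each property to well-known facts about numerical semigroups and about the topology of links of singularities, invoking the references already cited in the excerpt where the computation is purely algebraic.

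For part (a), I would recall that the semigroup $S$ is, by definition~\eqref{eq:G}, the image of $\operatorname{ord}\circ\gamma^*$. The key observation is that $S$ coincides with the set of intersection multiplicities $(C\cdot D)_z$ of $C$ with auxiliary curves $D$ passing through $z$ (equivalently, the orders of vanishing along the branch of functions $f\in\C[[x,y]]$), and such intersection multiplicities are determined by the embedded topological type of the branch, hence by the link. Concretely, one shows that for the standard parametrization $x=t^p$, $y=t^{q_1}+\ldots+t^{q_n}$ the value $\operatorname{ord}\gamma^*(f)$ depends only on the Newton pairs / splice diagram of the link; this is the content of \cite[Chapter 4]{Wa} and \cite{EN}. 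So $S$ is a topological invariant and, since the characteristic sequence determines the link (stated in the excerpt), $S$ is determined by $(p;q_1,\ldots,q_n)$.

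For part (b), the minimal generating set: I would invoke the classical description (see \cite[Chapter 4]{Wa}) of the generators $\bar{q}_0=p$, $\bar{q}_1=q_1$, and $\bar{q}_{k}$ defined recursively via $\bar{q}_{k}=e_{k-1}q_k$-type formulas where $e_k=\gcd(p,q_1,\ldots,q_k)$; the hypotheses on the characteristic sequence (that $e_{k-1}\nmid q_k$ and $e_n=1$) guarantee that there are exactly $n+1$ such generators and that none is redundant. I would cite this rather than rederive the recursion. For part (c), I would use the symmetry of $S$: a one-branch plane curve singularity is a complete intersection, so its semigroup is symmetric, and the number of gaps $\#(\Z_{\ge0}\setminus S)$ equals the Frobenius-type quantity $(c+1)/2$ where $c$ is the conductor; identifying $c$ with $\mu$ and using $\mu=2g(L)$ for a cuspidal link gives $\#(\Z_{\ge0}\setminus S)=\mu/2$. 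The equality $\mu=2\delta$ is the Milnor--Orlik / Milnor formula, and $\delta=\#(\Z_{\ge0}\setminus S)$ is the standard $\delta=\dim\ol{\mathcal O}/\mathcal O$ computation. For part (d), the statement that $\mu-1$ is the largest gap and that exactly one of $j,\mu-j-1$ lies in $S$ is precisely the symmetry of the numerical semigroup with Frobenius number $\mu-1$ (equivalently conductor $\mu$), which is again the Gorenstein/symmetry property of a plane curve branch; I would cite \cite[Chapter 4]{Wa} or the Gorenstein symmetry theorem.

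\textbf{Main obstacle.} The only genuinely substantive point is the symmetry statement underlying (c) and (d): that the numerical semigroup of a unibranch plane curve singularity is symmetric, with Frobenius number equal to $\mu-1$. Everything else is bookkeeping with the characteristic sequence. Since this symmetry is a classical theorem (going back to Apéry/Azevedo and appearing in \cite{BK,Wa}), and since the excerpt explicitly says ``The next result can be found, for example, in \cite[Chapter 4]{Wa}'', the honest plan is to state the reductions above and defer the symmetry to the cited reference rather than to reprove the Gorenstein property of a plane curve branch from scratch.
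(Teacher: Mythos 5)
Your proposal is correct and essentially matches the paper, which does not prove Lemma~\ref{lem:semiprop} at all but simply cites \cite[Chapter 4]{Wa} for these classical facts; your sketch of the reductions (topological invariance of intersection multiplicities for (a), the gcd recursion for the generators in (b), and the symmetry/Gorenstein property with conductor $\mu$ for (c) and (d)) is the standard argument found in that reference. Deferring the symmetry theorem to the cited literature is exactly what the paper does.
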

\begin{example}
Suppose that the singularity is given by $\{x^p-y^q=0\}$, where $p$ and $q$ are coprime integers. Then its link is the  torus knot $T(p,q)$.
The map $t\to(t^q,t^p)$ is a normalization of the singularity, hence the image $\gamma^*(\C[[x,y]])$ is $\C[[t^p,t^q]]\subset\C[[t]]$.
This semigroup $S$ is generated by $p$ and $q$. The maximal element not belonging to the semigroup
 can be shown to be    $(p-1)(q-1)-1$.
\end{example}

We point out that semigroups arising from singularities are  special; the last property of Lemma~\ref{lem:semiprop} puts
a very strong restriction on the semigroups that arise in the case that  the number of generators is $3$ or more. For a general semigroup of $\mathbb{N}$ with three
or more generators, computing the value of a maximal element not belonging to a semigroup is a challenging problem; see \cite{RA}.

\subsection{Surgeries on knots and $d$--invariants}\label{sec:surg}

  Heegaard Floer homology was  defined by Ozsv\'ath and Szab\'o. Out of the very long list of their papers, the most
relevant in our applications is \cite{OzSz03}. The methods we are using in this article can be found in more detail in \cite{HHN12, HLR12} or
in \cite{BL}. An overview of  Heegaard Floer homology theory can be found in \cite{OS-introduction,OS-introduction2}. 

Briefly, to any closed oriented three manifold $M$ with
a choice of a \spinc{} structure $\sss$ one associates groups $HF^+(M,\sss)$, $HF^-(M,\sss)$, $HF^{\infty}(M,\sss)$ and $\widehat{HF}(M,\sss)$. The group
$HF^\infty(M,\sss)$ is a $\Z_2[U,U^{-1}]$ module (with $U$ a formal variable), $HF^-$ and $HF^+$ are $\Z_2[U]$ modules. 
There is a natural map $HF^\infty(M,\sss)\to HF^+(M,\sss)$ commuting with multiplication by $U$.  
The $d$--invariant (see \cite{OzSz03}) is the minimal grading of a non-zero element $x\in HF^+(M,\sss)$, 
which is in the image of $HF^\infty$. The fundamental property of $d$--invariants
is the following result proved by Ozsv\'ath and Szab\'o.

\begin{proposition}[see \expandafter{\cite[Section 9]{OzSz03}}]\label{prop:fundbound}
Suppose $(M_1,\sss_1)$ and $(M_2,\sss_2)$ are closed oriented three manifolds such that $b_1(M_1)=b_1(M_2)=0$ and with a choice of a \spinc{} structure $\sss_1$
on $M_1$ and $\sss_2$ on $M_2$. Suppose that $W$ is a  compact oriented
smooth four manifold such that $\partial W=M_2\cup -M_1$ (that is, $W$ is a cobordism between $M_1$ and $M_2$)  and that on $W$ there is a \spinc{} 
structure $\sst$ restricting to $\sss_1$ on $M_1$ and $\sss_2$ on $M_2$. If the intersection form on $H_2(W)$ is \emph{negative definite}, then
\[d(M_2,\sss_2)-d(M_1,\sss_1)\ge \frac{1}{4}\left(c_1^2(\sst)-3\sigma(W)-2\chi(W)\right).\]
\end{proposition}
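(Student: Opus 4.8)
The plan is to extract the inequality from the functoriality of Heegaard--Floer homology under \spinc{} cobordisms, together with the Ozsv\'ath--Szab\'o theorem that a negative definite cobordism induces an isomorphism on $HF^\infty$.

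First I would recall the relevant structure. Orienting $W$ so that it is a cobordism from $M_1$ to $M_2$ (matching $\partial W=M_2\cup -M_1$), the pair $(W,\sst)$ induces $\Z_2[U]$--equivariant homomorphisms $F^\infty_{W,\sst}\colon HF^\infty(M_1,\sss_1)\to HF^\infty(M_2,\sss_2)$ and $F^+_{W,\sst}\colon HF^+(M_1,\sss_1)\to HF^+(M_2,\sss_2)$ which commute with the natural maps $\pi\colon HF^\infty\to HF^+$ and each raise the absolute $\Q$--grading by
\[
\Delta:=\tfrac14\bigl(c_1^2(\sst)-2\chi(W)-3\sigma(W)\bigr).
\]
Since $b_1(M_i)=0$ the structure $\sss_i$ is torsion, $HF^\infty(M_i,\sss_i)\cong\Z_2[U,U^{-1}]$ as a relatively graded module, and the image of $\pi$ inside $HF^+(M_i,\sss_i)$ is the standard tower; by the very definition of the $d$--invariant recalled above, the minimal grading of a non-zero homogeneous element of that image is $d(M_i,\sss_i)$, so in particular every non-zero homogeneous element of $\operatorname{im}\pi\subset HF^+(M_i,\sss_i)$ has grading $\ge d(M_i,\sss_i)$, with equality attained.

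The key input is that, because the intersection form on $H_2(W)$ is negative definite (equivalently $b_2^+(W)=0$), the map $F^\infty_{W,\sst}$ is an \emph{isomorphism}; this is \cite[Section 9]{OzSz03}. Granting it, I would finish by a grading chase. Choose a homogeneous $a\in HF^\infty(M_2,\sss_2)$ of grading $d(M_2,\sss_2)$ with $\pi(a)\ne 0$, and, using surjectivity of $F^\infty_{W,\sst}$, write $a=F^\infty_{W,\sst}(b)$ with $b\in HF^\infty(M_1,\sss_1)$ homogeneous; then $b$ has grading $d(M_2,\sss_2)-\Delta$. Commutativity with $\pi$ gives
\[
F^+_{W,\sst}\bigl(\pi(b)\bigr)=\pi\bigl(F^\infty_{W,\sst}(b)\bigr)=\pi(a)\ne 0,
\]
so $\pi(b)\ne 0$; being a non-zero homogeneous element of $\operatorname{im}\pi\subset HF^+(M_1,\sss_1)$ of grading $d(M_2,\sss_2)-\Delta$, it must satisfy $d(M_2,\sss_2)-\Delta\ge d(M_1,\sss_1)$, which is precisely the claimed inequality.

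The main obstacle, unsurprisingly, is the isomorphism statement for $F^\infty_{W,\sst}$; everything else is formal. Since $b_1(M_i)=0$, both groups $HF^\infty(M_i,\sss_i)$ are single copies of $\Z_2[U,U^{-1}]$, so the homogeneous $\Z_2[U]$--equivariant map $F^\infty_{W,\sst}$ between them is automatically either zero or an isomorphism; the content of \cite[Section 9]{OzSz03} is that the hypothesis $b_2^+(W)=0$ forces it to be non-zero. One proves this by decomposing $W$ into elementary cobordisms and tracking the induced maps through the surgery exact triangles, the vanishing of $b_2^+$ being exactly what makes the relevant connecting homomorphisms trivial on $HF^\infty$. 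This is the one point where \emph{definiteness} of the intersection form --- as opposed to any weaker positivity hypothesis --- is genuinely needed.
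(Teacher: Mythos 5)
The paper offers no proof of this proposition: it is stated as a quoted result, with \cite[Section 9]{OzSz03} cited as the source, so there is no internal argument to compare yours against. Your derivation is the standard one behind that citation, and it is correct. In particular you have the grading chase going in the right direction: the subtle point is that one must pull the degree-$d(M_2,\sss_2)$ element of the tower in $HF^+(M_2,\sss_2)$ \emph{back} through the isomorphism $F^\infty_{W,\sst}$ and use commutativity with $\pi$ to see its preimage survives in $HF^+(M_1,\sss_1)$; pushing forward from $M_1$ instead would only yield the reverse (and false) inequality, since $\pi_2\circ F^\infty$ can kill the bottom of the tower. Your observation that a grading-homogeneous $U$--equivariant endomorphism of $\Z_2[U,U^{-1}]$ is either zero or invertible (the homogeneous image of $1$ is $U^k$ or $0$) correctly isolates the entire non-formal content in the non-vanishing of $F^\infty_{W,\sst}$, which is exactly what you, like the paper, take from \cite{OzSz03}. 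The one imprecision is in your parenthetical sketch of that non-vanishing: the surgery-triangle argument in \cite[Section 9]{OzSz03} is given for cobordisms consisting of two-handle additions, and a general negative-definite cobordism may also contain one- and three-handles, which must be dealt with separately (via the standard $\Lambda^*(H_1(S^1\times S^2))$--type computation and the composition law) before the two-handle case applies. Since you are black-boxing precisely the result the paper black-boxes, this does not constitute a gap in your proof, but the sketch as written slightly understates what the hypothesis ``negative definite'' has to do beyond controlling the two-handles.
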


The main problem with applying this result is that often it is difficult to compute the $d$--invariant. Fortuitously, in our case we are able to do this  because
the knots in question will always be connected sums of algebraic knots.
To state the algorithm, let us introduce some notation.

For a knot $J\subset S^3$,   surgery on $J$ with coefficient $p/q$ will be denoted  $S^3_{p/q}(J)$. In the case that $q=1$ and $p\neq 0$, there is an enumeration of possible
\spinc{} structures on $S^3_p(J)$, explained in detail in Section~\ref{sec:spinc}. For the moment we note that they are enumerated by integers $m\in[-p/2,p/2)$
and the structure corresponding to $m\in[-p/2,p/2)$ will be  denoted by $\sss_m$.
By $g(J)$ we denote the three genus (that is, the Seifert genus) of $J$.

We now suppose that $J=J_1\#\ldots\# J_n$ is a connected sum of algebraic knots. For a knot $J_k$, we denote by $S_k$ the semigroup
of the corresponding singular point and denote by $I_k$ the following function:
\[I_k(m):=\#\{x\in\Z,\ x\ge m,\ x\not\in S_k\}.\]
In \cite{BL} this is called a \emph{gap function}.   As in~\cite{BL}, we define
\[I_J=I_1\diamond I_2\diamond \ldots \diamond I_n.\]
Here \[I\diamond I'(k)=\min_{l\in\Z} I(k-l)+I'(l),\] is the {\it infimum convolution},  defined for any pair of functions  $I,I'\colon\Z\to\Z$ that are  bounded from below.

The following result, proved first by Ozsv\'ath and Szab\'o if $n=1$ (see for instance \cite[Theorem 1.2]{OzSz11}),
gives us a description of $d$--invariants for large surgeries on $J$.
\begin{proposition}[see \cite{BL}]\label{prop:d-compute}
Let $s>2g(J)$ be an integer and let $m\in[-s/2,s/2)$. Then 
\be\label{eq:dofsurg}
d(S^3_s(J),\sss_m)=-2I_J(m+g(J))+\frac{(2m-s)^2+s}{4s}.
\ee
\end{proposition}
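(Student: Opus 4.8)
The plan is to reduce the statement to the case $n=1$, which goes back to Ozsv\'ath and Szab\'o (\cite[Theorem~1.2]{OzSz11}), by combining the large surgery formula with the behaviour of knot Floer complexes under connected sum. First I would recall the large surgery formula: for any knot $J\subset S^3$ and any $s>2g(J)$ there is a quotient complex $A_m^+(J)$ of $CFK^\infty(J)$ (concretely $C\{\max(i,j-m)\ge 0\}$) whose homology, after an overall grading shift, is $HF^+(S^3_s(J),\sss_m)$, and that grading shift is precisely the correction term $d(S^3_s(U),\sss_m)$ of the lens space $L(s,1)=S^3_s(U)$, which in the normalization used here equals $\frac{(2m-s)^2+s}{4s}$. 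Passing to $d$--invariants this reads
\[d(S^3_s(J),\sss_m)=\frac{(2m-s)^2+s}{4s}-2V_m(J),\]
where $V_m(J)\ge 0$ is the local $h$--invariant of $A_m^+(J)$, which for $s>2g(J)$ depends only on $J$ and $m$. Hence the proposition is equivalent to the identity $V_m(J)=I_J(m+g(J))$.

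For a single algebraic knot $J_k$ this identity is the $n=1$ case, which I would simply quote: $CFK^\infty(J_k)$ is filtered chain homotopy equivalent to a \emph{staircase complex} whose step lengths are read off from the semigroup $S_k$, and a direct inspection of $A_m^+$ for such a complex shows that $V_m(J_k)$ counts exactly the gaps of $S_k$ that are $\ge m+g(J_k)$, i.e.\ $V_m(J_k)=I_k(m+g(J_k))$. For the connected sum, the K\"unneth formula for knot Floer homology gives $CFK^\infty(J)\simeq CFK^\infty(J_1)\otimes_{\Z_2[U,U^{-1}]}\cdots\otimes_{\Z_2[U,U^{-1}]}CFK^\infty(J_n)$, so $CFK^\infty(J)$ is chain homotopy equivalent to a tensor product of $n$ staircases. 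The key lemma is that the local $h$--invariant of $A_m^+$ of such a tensor product is the infimum convolution of the factors, namely $V_m(J_1\#\cdots\#J_n)=\min_{m_1+\cdots+m_n=m}\sum_k V_{m_k}(J_k)$. Granting it, I would substitute $l_k=m_k+g(J_k)$ and use additivity of the three--genus $g(J)=\sum_k g(J_k)$ to rewrite the right hand side as $\min_{l_1+\cdots+l_n=m+g(J)}\sum_k I_k(l_k)=(I_1\diamond\cdots\diamond I_n)(m+g(J))=I_J(m+g(J))$, which is exactly what is needed.

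The heart of the proof, and the step I expect to be the main obstacle, is this last lemma: computing the local $h$--invariant, equivalently locating the bottom of the $\mathcal T^+$--tower in $H_*(A_m^+)$, for a tensor product of staircase complexes. Tensor products of staircases are no longer staircases — they acquire many box--shaped and longer acyclic summands — so one must verify both that none of these summands sits below the tower and that the grading of the tower is governed precisely by the optimal splitting $m=m_1+\cdots+m_n$. I would carry this out by induction on $n$, peeling off one staircase factor $J_n$ at a time and relating $A_m^+$ of $C\otimes(\text{staircase of }J_n)$ to the family $\{A_{m'}^+(C)\}_{m'}$, with the reindexing dictated by the semigroup $S_n$; the base case is the single--knot computation quoted above. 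Everything else in the argument — the large surgery formula, the K\"unneth theorem, and the semigroup bookkeeping — is standard, and the computation is carried out in detail in \cite{BL}.
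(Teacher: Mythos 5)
The paper gives no proof of this proposition at all --- it is quoted verbatim from \cite{BL} --- and your outline (large surgery formula with the lens-space grading shift, the staircase description of the knot Floer complex of an algebraic knot, the K\"unneth theorem for connected sums, and the infimum-convolution lemma for the local $h$--invariants of $A_m^+$ of a tensor product of staircases) is precisely the argument carried out in that reference, including a correct identification of the tensor-product computation as the only nontrivial step. So your proposal is correct and follows the same route as the source the paper defers to.
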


\subsection{Deformations}\label{sec:deform}
We adopt a simple definition of a deformation.
In the case of isolated plane curve singularities, this definition is equivalent to a general one (with one-dimensional parameter family), 
given, for example, in \cite[Definition II.1.1]{GLS}.

Let us consider a cuspidal singularity $(C,z)$. There is a neighbourhood $U\subset\C^2$ of $z$ and a map $F\colon U\to\C$
such that $F^{-1}(0)=C\cap U$ and the gradient of $F$ does not vanish at points sufficiently close  but not equal to  $z$. By  the Tougeron theorem, see
\cite[Theorem I.2.5]{Zo}, we can replace $(C,z)$ by an analytically equivalent singularity (see \cite[Definition I.3.30]{GLS}) 
such that $F$ is a polynomial and hence defined on $\C^2$. For convenience, we will assume that $F$ is a polynomial.

So let $F\colon\C^2\to\C$ be a polynomial map such that $(F^{-1}(0),z)$ is a cuspidal singularity. By a \emph{deformation} (sometimes called also
\emph{unfolding})
of this singularity   we mean an analytic family of polynomials $F_s\colon\C^2\to\C$, where $s$ varies in a unit disk $D$ in a complex plane,
$F=F_0$, and for any $s\in D$, the inverse image $F_s^{-1}(0)$ is an  algebraic curve with isolated singularities. In short, 
deforming a singularity can be regarded as varying  the coefficients of the polynomial $F$ in an analytic way (the coefficients are analytic functions of 
the parameter $s$). 

\begin{remark}
In the present article we shall actually use   only  the fact that the coefficients of $F_s$ depend $C^1$--smoothly on the parameter $s$.
\end{remark}

Before we define the $\delta$--constant deformations, let us recall the definition of the $\delta$--invariant. For a singular point $z$ with
Milnor number $\mu$ and $r$ branches, we define $\delta=\frac12(\mu+r-1)$; compare \cite[Section 10]{Milnor-sing}. There are
other equivalent definitions, for example in \cite[Section I.3.4]{GLS}. For cuspidal singularities $\delta$ is equal to the three  genus of the link. For an
ordinary double point (that is, given by $\{(x,y)\in\C^2\colon xy=0\}$), we have $\delta=1$.
 
Consider a deformation and   choose a ball $B(z,r)$ of a small radius  such that $L_0=F_0^{-1}(0)\cap \partial B(z,r)$
is the link of the singularity of $F_0$ at $0$ and $F_0^{-1}(0)\cap B(z,r)$ is a cone over $L_0$. We will assume that $L_0$ has a single branch
so that the singularity is cuspidal. In particular, $F_0^{-1}(0)\cap B(z,r)$ is topologically a disk.

For this given $r$, there exists  a $\rho>0$ such that if $|s|<\rho$, the intersection $F_s^{-1}(0)\cap \partial B(z,r)$ 
is isotopic to $L_0$. Let us choose such $\rho$.

\begin{definition}[see \expandafter{\cite[Section II.2.7]{GLS} or \cite{Te}}]\label{def:deltaconst}
A deformation is called $\delta$--constant if there exists $\rho'\in(0,\rho)$ such that for any $s\in D$ such that $|s|<\rho'$,
the intersection $F_s^{-1}(0)\cap B(z,r)$ has genus 0 (in the sense of Definition~\ref{genusdef}).  
\end{definition}
\begin{example}\label{ex:simpledeformation}
Consider the  deformation $F_s=x^3-y^2+s$ with $z=(0,0)$. For any $r>0$, an apropriate $\rho$ as above, and $0<|s|<\rho$, the inverse 
image $F^{-1}_s(0)\cap B(0,r)$ is a smooth curve of genus $1$, hence the deformation is not $\delta$--constant.  On the other hand,
the deformation $F_s=x^3-y^2+sx^2$ is $\delta$--constant. In fact, the map $\C\to\C^2$ given by $t\mapsto (t^2-s,t^3-ts)$ is a normalization map for $F_s^{-1}(0)$.
Alternatively, we can use
the numerical argument given in Lemma~\ref{lem:sumofdelta} below.
\end{example}

Properties of $\delta$--constant deformations were studied extensively in \cite{Te}; we refer also to \cite[Section II.2.6]{GLS}.
To explain the origin of the name $\delta$--constant, let us recall the following result due to Teissier;  see \cite[Theorem II.2.54]{GLS}.

\begin{lemma}\label{lem:sumofdelta}
Suppose that the singularity of $F_0$ at $z$ has $\delta$--invariant equal to $\delta_0$. Let $s$ be such that $|s|<\rho$ and let $z_1,\ldots,z_n$
be singular points of $F_s^{-1}(0)$. Let $\delta_1,\ldots,\delta_n$ be their $\delta$--invariants. Then the genus of $F_s^{-1}(0)\cap B(z,r)$
is equal to
\[\delta_0-\sum_{j=1}^n\delta_j.\]
\end{lemma}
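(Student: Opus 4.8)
The plan is to compute $g(F_s^{-1}(0)\cap B(z,r))$ through Euler characteristics and to reduce the statement to the \emph{conservation of the Milnor number}. Write $C_s=F_s^{-1}(0)\cap B(z,r)$ and let $\nu\colon\wt{C_s}\to C_s$ be the normalization, with $\wt{C_s}$ a compact surface. For $|s|<\rho$ the boundary $\partial C_s=F_s^{-1}(0)\cap\partial B(z,r)$ is isotopic to the knot $L_0$, hence connected, and $C_s$ has no closed component (a compact complex curve cannot sit inside the affine ball), so $C_s$ and $\wt{C_s}$ are connected with connected boundary. First I would establish
\[\chi(\wt{C_s})=\chi(C_s)+\sum_{j=1}^n(r_j-1),\]
where $r_j$ is the number of branches of $z_j$: excise small cone neighbourhoods $N_j$ of the $z_j$ from $C_s$ and the corresponding $r_j$ disks $\nu^{-1}(N_j)$ from $\wt{C_s}$; the map $\nu$ is a homeomorphism on the complements, each $N_j$ has $\chi=1$, each $\nu^{-1}(N_j)$ has $\chi=r_j$, and the links $\partial N_j$ contribute nothing to $\chi$. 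Combining with the genus formula \eqref{eq:genus} for $\wt{C_s}$ (which gives $\chi(\wt{C_s})=1-2g(C_s)$, since $b_0(\partial C_s)=1$) yields
\[g(C_s)=\frac{1-\chi(C_s)}{2}-\frac12\sum_{j=1}^n(r_j-1).\]
As $\delta_j=\tfrac12(\mu_j+r_j-1)$ and $\delta_0=\tfrac12\mu_0$ (the link $L_0$ being connected), the asserted identity $g(C_s)=\delta_0-\sum_j\delta_j$ is therefore equivalent to the equality
\[\chi(C_s)=1-\mu_0+\sum_{j=1}^n\mu_j.\]

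To prove this equality I would compare $C_s$ with a nearby smooth level set. Choose disjoint balls $B_j\subset B(z,r)$ centred at the $z_j$ on which $F_s$ has no critical point on $C_s$ other than $z_j$, and pick a sufficiently small generic value $\e\neq0$ (depending on $s$); then $F_s^{-1}(\e)$ is smooth in $B(z,r)$, transverse to each $\partial B_j$ and to $\partial B(z,r)$, is isotopic to $C_s$ outside $\bigcup B_j$, and inside each $B_j$ replaces the cone $C_s\cap B_j$ ($\chi=1$) by the Milnor fibre of $z_j$ ($\chi=1-\mu_j$). An inclusion–exclusion count, the links $C_s\cap\partial B_j$ again contributing nothing, gives $\chi(F_s^{-1}(\e)\cap B(z,r))=\chi(C_s)-\sum_j\mu_j$. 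Finally, for $|s|$ small and then $|\e|$ small, $F_s^{-1}(\e)\cap B(z,r)$ is isotopic to the Milnor fibre of the singularity of $F_0$ at $z$, so its Euler characteristic is $1-\mu_0$, and the equality follows. Alternatively one may simply quote the conservation of the Milnor number from \cite{Te} or \cite[Section II.2.6]{GLS}.

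The main obstacle is the last step: making precise that the generic nearby level set $F_s^{-1}(\e)\cap B(z,r)$ genuinely carries the homotopy type of the Milnor fibre of $F_0$ at $z$, uniformly in small $s$. This requires choosing (or shrinking) $\rho$ so that $F_s^{-1}(\e)$ meets $\partial B(z,r)$ transversally for all small $\e$ and all $|s|<\rho$ — an application of Ehresmann's fibration theorem to the family $(s,\e)\mapsto F_s$ near the sphere — together with the finiteness of the critical values of each $F_s$ to secure the genericity of $\e$. Once this local-to-global bookkeeping is arranged, everything else is elementary additivity of the Euler characteristic.
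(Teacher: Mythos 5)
The paper does not actually prove this lemma: it is quoted as a result of Teissier, with \cite[Theorem II.2.54]{GLS} given as the reference, so there is no in-paper argument to measure yours against. Your proof is the standard topological one and it is correct. The reduction is clean: the normalization changes the Euler characteristic of $C_s=F_s^{-1}(0)\cap B(z,r)$ by $\sum_j(r_j-1)$; the genus formula \eqref{eq:genus}, applied to the connected normalization with connected boundary (connectedness does follow, as you note, from $\p C_s\cong L_0$ being a knot together with the impossibility of closed analytic curves in an affine ball), gives
\[g(C_s)=\frac{1-\chi(C_s)}{2}-\frac12\sum_j(r_j-1);\]
and with $\delta_0=\mu_0/2$ and $\delta_j=\frac12(\mu_j+r_j-1)$ the lemma becomes exactly $\chi(C_s)=1-\mu_0+\sum_j\mu_j$, i.e.\ conservation of the Milnor number. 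Your derivation of that equality --- replace the cone neighbourhood of each $z_j$ (Euler characteristic $1$) by the local Milnor fibre (Euler characteristic $1-\mu_j$), then identify the resulting smooth level set with the Milnor fibre of $F_0$ at $z$ --- is the standard argument, and you correctly isolate the one genuinely technical point: that $F_s^{-1}(\e)\cap B(z,r)$, for $|s|$ small and $\e$ small and generic, is diffeomorphic to the Milnor fibre of $F_0$. This needs local triviality of the two-parameter family away from the discriminant together with transversality to $\p B(z,r)$, and that is precisely what the cited sources supply; Teissier's own proof is more algebraic (normalization of the total space of the deformation, $\delta$ as a colength), so your route is if anything closer in spirit to the topological aims of this paper. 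I see no gap beyond the points you have already flagged as standard and attributable to the references.
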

In particular, the genus is $0$ if and only if $\sum_{j=1}^n\delta_j=\delta_0$. In other words, the sum of $\delta$--invariants
over all singular points of $F_s^{-1}(0)\cap B(z,r)$ does not depend on $s$ for $s$ sufficiently small.

\subsection{Some homological algebra}
For the convenience of the reader, we present a few  facts from homological algebra. 
We refer to \cite{Dold} for more details. 
Let $X$ be a 4--manifold such that $H_1(X)=0$. 

\begin{lemma}\label{lem:det1}
Let $\A$ be an integer matrix representing the intersection form  on $X$, $  H_2(X;\Z)\times H_2(X;\Z)\to\Z$, with respect to some  choice of basis. 
Then $|H_1(\p X)|=|\det \A\, |$.
\end{lemma}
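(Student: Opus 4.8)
The statement is the classical fact that for a $4$--manifold $X$ with $H_1(X)=0$, the order of $H_1(\partial X)$ equals the absolute value of the determinant of the intersection form. I would deduce this from Poincar\'e--Lefschetz duality together with the long exact sequence of the pair $(X,\partial X)$. First I would recall that $H_2(X;\Z)$ is finitely generated; since $H_1(X)=0$, the universal coefficient theorem gives that $H^2(X;\Z)$ is the free part of $H_2(X;\Z)$, or more precisely that the natural map $H_2(X;\Z)\to \Hom(H^2(X;\Z),\Z)$ behaves well. The key players are the maps in the commutative square relating $H_2(X;\Z)$, $H_2(X,\partial X;\Z)$, $H^2(X;\Z)$ and the intersection pairing.

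The core of the argument is the following. By Lefschetz duality, $H_2(X,\partial X;\Z)\cong H^2(X;\Z)$, and by universal coefficients (using $H_1(X)=0$, so there is no Ext contribution from $H_1$) $H^2(X;\Z)\cong \Hom(H_2(X;\Z),\Z)$. Under these identifications the map $j_*\colon H_2(X;\Z)\to H_2(X,\partial X;\Z)$ becomes exactly the adjoint of the intersection form, i.e.\ the map $\A\colon \Z^r\to \Z^r$ given by the matrix $\A$ in the chosen basis. Then the long exact sequence of the pair,
\[
H_2(\partial X;\Z)\to H_2(X;\Z)\xrightarrow{\ j_*\ } H_2(X,\partial X;\Z)\to H_1(\partial X;\Z)\to H_1(X;\Z)=0,
\]
shows that $H_1(\partial X;\Z)\cong \coker(j_*)=\coker(\A)$. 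Since $\A$ is a square integer matrix, $|\coker \A| = |\det\A|$ when $\det\A\neq 0$ (and both sides are infinite otherwise, though in our applications $\partial X$ is a rational homology sphere so $\det\A\neq 0$). This gives $|H_1(\partial X)| = |\det\A|$, as claimed.

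\textbf{Main obstacle.}
The only genuinely delicate point is verifying that, after the duality and universal--coefficient identifications, the map $j_*\colon H_2(X;\Z)\to H_2(X,\partial X;\Z)$ really is represented by the \emph{same} matrix $\A$ that represents the intersection form; this is a naturality/sign bookkeeping issue, and it is exactly where one must be careful that the pairing $H_2(X)\times H_2(X)\to\Z$ factors as $H_2(X)\to H_2(X,\partial X)\cong H^2(X)\to\Hom(H_2(X),\Z)$. Everything else — the exact sequence, the cokernel computation, and the elementary linear algebra fact $|\coker\A|=|\det\A|$ via Smith normal form — is routine. I would cite \cite{Dold} for the duality and universal coefficient statements and then state the matrix identification as the essential observation, deferring the sign check to a one--line remark.
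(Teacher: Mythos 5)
Your argument is correct and follows essentially the same route as the paper: both factor the intersection pairing as $H_2(X;\Z)\to H_2(X,\p X;\Z)\to H^2(X;\Z)\to\Hom(H_2(X;\Z),\Z)$ with the last two maps isomorphisms (duality and universal coefficients, the latter using $H_1(X)=0$), identify $H_1(\p X)$ with the cokernel of the first map via the long exact sequence of the pair, and conclude with $|\coker\A|=|\det\A|$. Your explicit remarks about the degenerate case $\det\A=0$ and the sign/naturality bookkeeping are sensible refinements but do not change the substance.
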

\begin{proof}
From an abstract point of view, the intersection pairing is a composition
\[H_2(X;\Z)\to H_2(X,\p X;\Z)\to H^2(X;\Z)\to\Hom(H_2(X;\Z),\Z).\]
The first map is from the  long exact sequence of the pair, the second
is from Poincar\'e duality and the third   is from the universal coefficient theorem. The second and the third maps are isomorphisms.
If we choose a basis of $H_2(X;\Z)$ and let $\A$ be the corresponding intersection matrix, then the cardinality of
\[\coker \left(H_2(X;\Z)\to \Hom(H_2(X;\Z),\Z)\right)\]
is given by $| \det \A \,|$. But this cokernel is  the cokernel of the map
\[H_2(X;\Z)\to H_2(X,\p X;\Z),\]  and this cokernel is isomorphic to  $H_1(\p X)$, a fact that follows from the long exact homology sequence of pair $(X,\p X)$ and the condition that $H_1(X)=0$.
\end{proof}
\begin{corollary}\label{cor:det1}
Let $\alpha_1,\dots,\alpha_n$ be elements of $H_2(X;\Z)$ that span $H_2(X;\Q)$. Let $B$ be the matrix of intersections $\alpha_i\cdot\alpha_j$. Then
$\alpha_1,\dots,\alpha_n$ span $H_2(X;\Z)$ if and only if $| \det B\, | =  |H_1(\p X;\Z)|$.
\end{corollary}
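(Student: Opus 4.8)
The plan is to deduce Corollary~\ref{cor:det1} directly from Lemma~\ref{lem:det1} by a short index-computation argument. First I would set $L = H_2(X;\Z)$, a finitely generated free abelian group since $H_1(X)=0$ (so there is no torsion in $H_2$), and let $\Lambda \subset L$ be the subgroup generated by $\alpha_1,\dots,\alpha_n$. The hypothesis that the $\alpha_i$ span $H_2(X;\Q)$ says precisely that $\Lambda$ has finite index in $L$; write $r = \operatorname{rank} L = n$ minus the number of relations, but the cleaner route is: both $\Lambda$ and $L$ are free of the same rank $r$, and $[L:\Lambda] < \infty$.

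The key computation is to compare $|\det B|$ with $|\det \A|$, where $\A$ is the intersection matrix of Lemma~\ref{lem:det1} in some basis of $L$. Choosing the $\alpha_i$ to be (after reindexing) a generating \emph{set} rather than a basis is a mild nuisance; I would first handle the case $n = r$, where $\alpha_1,\dots,\alpha_n$ is a basis of $\Lambda$. Then the change-of-basis matrix $P$ expressing the $\alpha_i$ in terms of a fixed basis of $L$ satisfies $B = P \A P^{T}$, so $|\det B| = |\det P|^2 \,|\det \A| = [L:\Lambda]^2 \,|\det \A|$. By Lemma~\ref{lem:det1}, $|\det \A| = |H_1(\partial X;\Z)|$, hence $|\det B| = |H_1(\partial X;\Z)|$ if and only if $[L:\Lambda] = 1$, i.e.\ if and only if $\Lambda = L$, which is exactly the assertion. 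For the case $n > r$ I would note that some subset of $r$ of the $\alpha_i$ still spans $H_2(X;\Q)$; but this does not immediately give $B$ the right shape. The robust fix is to use Smith normal form: write $\alpha_i = \sum_j p_{ij} e_j$ for a basis $\{e_j\}$ of $L$, so that $P = (p_{ij})$ is $n \times r$, and $B = P \A P^{T}$ still holds. Then $\det B = 0$ unless the $r \times r$ minors of $P$ have gcd equal to $1$ up to the appropriate factor; more precisely, the $\alpha_i$ span $L$ iff $P$ has an $r \times r$ minor equal to $\pm 1$ iff the gcd of all $r \times r$ minors of $P$ is $1$, and by the Cauchy--Binet formula $\det(P \A P^T)$ expands as a sum over pairs of $r$-subsets, which for the common index of sublattices reduces to $[L:\Lambda]^2 \det \A$ exactly when $\Lambda$ has finite index. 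So in all cases $|\det B| = [L:\Lambda]^2 \cdot |\det \A|$, and the corollary follows.

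I expect the main (only) obstacle to be the bookkeeping in the $n > r$ case: making the Cauchy--Binet argument precise and confirming that the relevant gcd-of-minors criterion for "$\Lambda = L$" matches the determinant condition cleanly. If one wants to sidestep this entirely, the cleanest presentation is to observe that one may assume $n = \operatorname{rank} H_2(X;\Z)$ without loss of generality: indeed, if $\alpha_1,\dots,\alpha_n$ span $H_2(X;\Z)$ they span over $\Q$ and contain a $\Z$-basis only when $n$ equals the rank, but in general spanning over $\Q$ with $n$ elements forces $n \geq r$ and one reduces to a square subsystem. Given that the statement as phrased presumably intends $\alpha_1,\dots,\alpha_n$ to be a candidate basis (or at least a set whose span is to be tested), I would state and prove it with $P$ the (possibly non-square) matrix $(p_{ij})$ and invoke $B = P\A P^T$ together with Cauchy--Binet, noting that $\Lambda = L$ is equivalent to the $r \times r$ minors of $P$ generating the unit ideal, which under the finite-index hypothesis is equivalent to $|\det B| = |\det \A| = |H_1(\partial X;\Z)|$. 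The whole argument is two or three lines once Lemma~\ref{lem:det1} is in hand.
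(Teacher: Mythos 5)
Your argument is correct in the case that matters and is exactly the intended one: the paper offers no proof of Corollary~\ref{cor:det1} at all, treating it as immediate from Lemma~\ref{lem:det1} via the standard index computation $|\det B|=|\det P|^2\,|\det\A|=[L:\Lambda]^2\,|H_1(\partial X)|$ for a square change-of-basis matrix $P$. One caveat: your blanket claim that ``in all cases $|\det B|=[L:\Lambda]^2\det\A$'' is false when $n$ strictly exceeds the rank $r$, since then $B$ is an $n\times n$ matrix of rank at most $r$ and $\det B=0$ even though $[L:\Lambda]^2|\det\A|\neq 0$ (and indeed a set of $n>r$ elements can generate $H_2(X;\Z)$ while having $\det B=0$, so the corollary itself is only meaningful for $n=r$). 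This does not affect anything, as every application in the paper has $n$ equal to the rank of $H_2(X;\Z)$, and your square-case argument settles those completely.
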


The intersection form $H_2(X;\Q)\times H_2(X;\Q)\to\Q$ gives rise to the intersection form $H^2(X;\Q)\times H^2(X;\Q)\to\Q$. We can
restrict it to a map
\[H^2(X;\Z)\times H^2(X;\Z)\to\Q,\]
which we will still call the intersection form.

\begin{lemma}\label{lem:det2}
Let $\alpha_1,\dots,\alpha_n$ be a basis of $H_2(X;\Z)$, and let  $\beta_1,\ldots,\beta_n$ the dual basis of $H^2(X;\Z)$. If the intersection form on $H_2(X;\Z)$
is represented by a matrix $\A$ in the given basis, then the intersection form on $H^2(X;\Z)$ is represented by $\A^{-1}$.
\end{lemma}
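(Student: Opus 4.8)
The plan is to unwind the two pieces of structure involved — the Kronecker pairing that identifies $H^2$ with the $\Z$--dual of $H_2$, and the transport of the $H_2$--intersection form along Poincar\'e duality — and then perform a one--line matrix manipulation. First I would note that, since $H_1(X)=0$ and hence $H_2(X;\Z)$ is free (it has the basis $\alpha_1,\dots,\alpha_n$), the universal coefficient theorem gives a natural isomorphism $H^2(X;\Z)\cong\Hom(H_2(X;\Z),\Z)$ via the evaluation pairing $\langle\,\cdot\,,\,\cdot\,\rangle$ (the obstruction term $\operatorname{Ext}(H_1(X;\Z),\Z)$ vanishes), and likewise $H^2(X;\Q)=\Hom(H_2(X;\Q),\Q)$. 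Under these identifications the dual basis is characterized by $\langle\beta_i,\alpha_j\rangle=\delta_{ij}$, and the intersection form on $H^2(X;\Q)$ is by definition the form obtained from the $H_2$--intersection form $Q$ by transport along the isomorphism $\Phi\colon H_2(X;\Q)\to H^2(X;\Q)$ determined by $\langle\Phi(v),w\rangle=Q(v,w)$. (Here $\Phi$ is invertible precisely because $\det\A\neq 0$, which is implicit in the hypothesis that $\A^{-1}$ exists; alternatively it follows from Lemma~\ref{lem:det1} once $H_1(\p X)$ is finite.)

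Next I would compute the matrix of $\Phi$. Evaluating gives $\langle\Phi(\alpha_i),\alpha_j\rangle=Q(\alpha_i,\alpha_j)=\A_{ij}$, so $\Phi(\alpha_i)=\sum_j\A_{ij}\beta_j$; since $\A$ is symmetric, this says that $\Phi$ is represented by the matrix $\A$ with respect to the bases $(\alpha_i)$ and $(\beta_j)$, and therefore $\Phi^{-1}$ is represented by $\A^{-1}$, i.e. $\Phi^{-1}(\beta_k)=\sum_i(\A^{-1})_{ki}\,\alpha_i$. By definition the value of the transported form on the pair $(\beta_k,\beta_l)$ is $\Phi^{-1}(\beta_k)\cdot\Phi^{-1}(\beta_l)$, which equals
\[
\sum_{i,j}(\A^{-1})_{ki}(\A^{-1})_{lj}\,\A_{ij}=\bigl(\A^{-1}\,\A\,(\A^{-1})^{T}\bigr)_{kl}=(\A^{-1})_{kl},
\]
using once more that $\A^{-1}$ is symmetric. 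Thus the intersection form on $H^2(X;\Z)$ is represented by $\A^{-1}$ in the dual basis, as claimed.

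I do not anticipate a genuine obstacle here: the content is entirely formal, and the only care required is in fixing the conventions — specifying which universal--coefficient isomorphism identifies $H^2$ with $H_2^{\ast}$, and recording that ``the intersection form on $H^2$'' denotes the form transported along duality — after which the symmetry and invertibility of $\A$ make all stray transposes disappear.
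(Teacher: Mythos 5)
Your proposal is correct and follows essentially the same route as the paper: both identify $H^2(X;\Q)$ with $\Hom(H_2(X;\Q),\Q)$, observe that in the bases $(\alpha_i)$, $(\beta_j)$ the duality map has matrix $\A$, and conclude that the transported form is $\A^{-1}\A\,\A^{-1}=\A^{-1}$. Your write-up is if anything slightly more explicit about the conventions (which universal-coefficient isomorphism is used and what ``the intersection form on $H^2$'' means), but the underlying argument is identical.
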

\begin{proof}
As in the proof of Lemma~\ref{lem:det1}, identify $H^2(X;\Q)$ with $\Hom(H_2(X;\Q),\Q)$. In the bases $\alpha$ and $\beta$, the map
$H_2(X;\Q)\to \Hom(H_2(X;\Q),\Q)$ is given by the matrix $\A$. The pairing $H^2(X;\Q)\to\Hom(H_2(X;\Q),\Q)$ is given by the composition
\[H^2(X;\Q)\xrightarrow{\A^{-1}} H_2(X;\Q)\xrightarrow{\;\A\;}\Hom(H_2(X;\Q),\Q)\xrightarrow{\A^{-1}}\Hom(H^2(X,\Q),\Q),\]
where the last map is dual to $H^2(X,\Q)\to H_2(X,\Q)$. Notice that all the maps are invertible over $\Q$.
Since the intersection form on $H^2(X;\Z)$ is given by the restriction, this concludes the proof.
\end{proof}

\section{Positively self--intersecting concordances and $\delta$--constant deformations}\label{sec:positively}
\subsection{Positively self-intersecting  concordances}\label{sec:psic}
Studying $\delta$-constant deformations of cuspidal singular points is closely related to computing a Gordian distance of two knots or asking 
whether one knot belongs to a minimal unknotting sequence of another; see, for instance,  \cite{Fe}. However, it is not completely the same
question as arises in studying  deformations, since  deformations involve crossing changes combined with concordance. The following notion will
take both into account, and we believe that it is a good translation of the notion $\delta$--constant deformation to a topological setting.
It is related to \cite[Definition 2.1]{CG}.
\begin{definition}
Given two oriented knots $K_0$ and $K_1$ in $S^3$,   a {\em positively self-intersecting concordance} from $K_0$ to $K_1$ is  a smoothly 
immersed annulus $A$  in $S^3\times [0,1]$ for which    $\partial(S^3 \times [0,1],A) = -(S^3, K_0) \cup (S^3,K_1)$ and such that 
the image has only positive self-intersections as singularities; at each singular point, only two branches of $A$ are allowed to intersect
and their intersection must be transverse. The \emph{\level} of a positively self-intersecting concordance is the number of self-intersections of $A$.
We require that this number is finite.
\end{definition}
\begin{example}\label{ex:cogo}
Suppose $K$ can be unknotted by changing $k$ positive crossings to negative. Then 
there is a positively self-intersecting concordance from the unknot to  $K$ with  \level{}~$k$.  The concordance is formed by removing an appropriate embedded 4--ball $B$ from $B^4$ and identifying $B^4-B$ with $S^3 \times [0,1]$ via an orientation preserving homeomorphism carrying $S^3 \times \{1\}$ to $S^3 = \partial B^4$.
\end{example}

Notice that unlike the standard notion of concordance,  one cannot use  positively self-intersecting concordances to generate a symmetric relation; 
the existence of such a concordance from   $K_0$  to $K_1$ does not imply the existence of one from   $K_1$  to $K_0$.

We will now show that a $\delta$--constant deformation gives rise to a positively self-intersecting concordance.

\begin{lemma}\label{lem:ktpsikc}
Let $F_s$ be a $\delta$--constant deformation as in Section~\ref{sec:deform}, such that the singularity of $F_0^{-1}(0)$ is unibranched.
Let $\kc$ be the link of the singularity of $F_0^{-1}(0)$. Let $s$ be close to $0$ and $z_1,\ldots,z_m$ be some (not necessarily all) of the unibranched singular
points of $F_s^{-1}(0)$. Let $\kz{1},\ldots,\kz{m}$ be their links. Let $\kt=\kz{1}\# \kz{2}\ldots\# \kz{m}$.
Then there exists a positively self-intersecting concordance from $\kt$  to $\kc$.
\end{lemma}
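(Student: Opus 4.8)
The idea is to build the required annulus directly from the geometry of the $\delta$--constant deformation, working inside the ball $B(z,r)\times D$ where $D$ is the parameter disk. First I would fix a small $r>0$ so that $\kc=F_0^{-1}(0)\cap\partial B(z,r)$ is the link of the singularity and $F_0^{-1}(0)\cap B(z,r)$ is the cone on $\kc$, and then fix $s_0$ with $0<|s_0|<\rho'$ (the radius from Definition~\ref{def:deltaconst}) and write $C_{s_0}=F_{s_0}^{-1}(0)\cap B(z,r)$, a surface of genus $0$. The singular points $z_1,\dots,z_m$ of $C_{s_0}$ are among the singularities of $F_{s_0}^{-1}(0)$; around each $z_j$ choose a tiny ball $B(z_j,\e_j)$ so that $C_{s_0}\cap\partial B(z_j,\e_j)=\kz{j}$. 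Removing these balls from $C_{s_0}$ gives a smooth surface $\Sigma$ whose boundary is $\kc\sqcup -\kz{1}\sqcup\cdots\sqcup -\kz{m}$; by the genus-$0$ hypothesis and a Mayer--Vietoris / Euler characteristic count (using formula~\eqref{eq:genus} and the additivity of $\delta$ via Lemma~\ref{lem:sumofdelta}) one checks that $\Sigma$ is planar, i.e.\ a $(m{+}1)$--holed sphere.

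Second, I would convert $\Sigma$ into a concordance in $S^3\times[0,1]$ from $\kt$ to $\kc$. The standard move is to take the connected sum of the knots $\kz{1},\dots,\kz{m}$: starting from the ambient sphere $\partial B(z,r)$, isotope the small spheres $\partial B(z_j,\e_j)$ toward a common region and tube them together, which turns the disjoint union $\bigsqcup\kz{j}$ into the connected sum $\kt$ at the cost of adding $1$--handles to $\Sigma$ connecting the corresponding boundary components. Because $\Sigma$ is planar and all the relevant components lie on nested spheres, these tubes can be chosen disjoint and embedded, so the resulting surface $A'$ is an \emph{embedded} annulus with $\partial A' = -\kt \sqcup \kc$, properly embedded in a cobordism diffeomorphic to $S^3\times[0,1]$ (the region between $\partial B(z,r)$ and a small sphere around the $z_j$'s after the tubing). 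At this stage there are in fact no self-intersections yet; the double points enter in the next step.

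Third — and this is where the genuine content lies — I would account for the fact that $\Sigma$, being a piece of the algebraic curve $C_{s_0}$, is an \emph{immersed} rather than embedded surface once we try to interpret it as a movie in $S^3\times[0,1]$: the surface $\Sigma\cap(S^3\times\{t\})$ need not be embedded for intermediate $t$, and more importantly, to realize the deformation as a concordance from $F_0$ to $F_{s_0}$ we must track $F_s^{-1}(0)\cap B(z,r)$ as $s$ runs along a path from $0$ to $s_0$ in $D$. Generically this traces out a singular surface in $B(z,r)\times[0,1]$ whose only singularities are transverse double points, and — this is the key point — because the surface carries a complex structure, every such double point has local intersection index $+1$. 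Thus the trace, after the tubing of the previous step and a radial rescaling identifying $B(z,r)\times[0,1]$ minus a neighborhood of the $z_j$--locus with $S^3\times[0,1]$, is exactly a positively self-intersecting concordance from $\kt$ to $\kc$ with finitely many positive double points. I would cite the standard fact (e.g.\ from Teissier's work referenced in Section~\ref{sec:deform}, or a direct genericity argument on the family $F_s$) that a generic path in $D$ produces only finitely many, transverse, and hence complex-positive, double points.

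\textbf{Main obstacle.} The delicate part is step three: making rigorous the passage from the static genus-$0$ surface $C_{s_0}$ to a genuine immersed \emph{annulus} in $S^3\times[0,1]$ with only positive transverse double points, i.e.\ controlling the degeneration of $F_s^{-1}(0)$ along a path $s(t)$ from $0$ to $s_0$ and arguing that one may perturb the path so that the total space of $\{(x,y,t): F_{s(t)}(x,y)=0\}\cap(B(z,r)\times[0,1])$ is an immersed surface with isolated transverse self-intersections, each necessarily positive because the tangent planes are complex lines. One also needs to know this surface is connected with the right boundary, which is where the genus-$0$ (equivalently $\delta$--constant) hypothesis is essential — without it the trace would have extra topology and would not be an annulus. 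I expect the connectedness and the boundary identification to follow cleanly from Lemma~\ref{lem:sumofdelta} and \eqref{eq:genus}, while the genericity statement about the path $s(t)$ is the one that requires care (a transversality argument for the family of curves, or a reference to the equisingularity theory of $\delta$--constant families).
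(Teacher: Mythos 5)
Your first two steps are essentially the paper's construction (the paper takes a tubular neighbourhood $U$ of a connected arc through $z_1,\dots,z_m$ rather than tubing separate balls, but that is the same move), and the identification of $B\sm U\cong S^3\times[0,1]$ with $\kt$ on the inner boundary and $\kc$ on the outer one is correct. The genuine gap is in your third step: you have misidentified where the double points come from. The positively self-intersecting concordance lives \emph{entirely in the single fiber} $C_{s_0}=F_{s_0}^{-1}(0)\cap B(z,r)$; there is no need to track the deformation parameter, and the object you propose, $\{(x,y,t):F_{s(t)}(x,y)=0\}\cap(B(z,r)\times[0,1])$, is real three-dimensional (each fiber is already a real surface), so it cannot be the immersed annulus. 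Likewise, the non-embeddedness of the slices $\Sigma\cap(S^3\times\{t\})$ is irrelevant to the definition of a concordance: what matters is whether the surface has self-intersections as a subset of the four-manifold $B\sm U$.

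The actual source of the double points is the set of singularities of $C_{s_0}$ \emph{other than} the chosen $z_1,\dots,z_m$ — the nodes, the non-unibranched singular points, and any unibranched singular points you did not select. Your surface $\Sigma$ is therefore not smooth after removing balls around $z_1,\dots,z_m$ only, and "planar'' can only refer to its normalization, not its image. The paper's proof handles this by taking the normalization $\psi\colon\wC\to C_{s_0}$ (a disk, by the genus-$0$ hypothesis), perturbing it to $\wt{\psi}=\psi+\e\phi$ with $\phi$ a generic analytic map so that the image has only ordinary double points, and observing that these are positive because the two branches at each double point are complex; each leftover singularity with invariant $\delta_z$ contributes $\delta_z$ such double points. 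This perturbation step — keeping the map analytic so that positivity is automatic, while making the image generic — is the technical heart of the argument and is missing from your proposal.
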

\begin{proof}
Consider a ball $B$ as constructed in Section~\ref{sec:deform}. Let $C_s=F_s^{-1}(0)\cap B$. Then $C_s\cap\p B$ is the link $\kc$. Choose a connected arc
$\gamma$
on $C_s$, smooth away from $z_2,\ldots,z_{m-1}$, 
having ends at points $z_1$ and $z_m$ and passing through $z_2,\ldots,z_{m-1}$ (if $m=1$ we choose $\gamma$ to be the point
$z_1$). Let $U$ be a tubular neighbourhood of $\gamma$~in~$B$. If $U$ is sufficiently small, the following hold:

\begin{itemize}
\item $U$ is a four ball with smooth boundary;
\item $\p U\cap C_s$ is isotopic to $\kt$;
\item $A:=\textrm{closure of }C_s\cap(B\sm U)$ is an analytic image of an annulus.
\end{itemize}
If $A$ has only ordinary double points as singularities, then the proof is complete. Otherwise we need to perturb $C_s$. We
will use a well known fact that a generic analytic map from $\C$ to $\C^2$ has only ordinary double points as singularities. Let $\wC\subset\C$ 
be the unit disk, and let $\psi\colon \wC\to C_s\subset\C^2$ be the normalization.

Let us now choose $\phi\colon \wC\to\C^2$ to be a generic analytic function. Fix $\e$ sufficiently small. The function $\wt{\psi}=\psi+\e\phi$
has the following properties.
\begin{itemize} 
\item The image $\wt{\psi}(\wC)\cap \p U$ is isotopic to $\kt$;
\item The image $\wt{\psi}(\wC)\cap \p B$ is isotopic to $\kc$;
\item The inverse image $\wt{\psi}^{-1}(U)$ is a disk and $\wt{\psi}^{-1}(B\sm U)$ is an annulus;
\item $\wt{\psi}(\wC)$ has only ordinary double points as singularities.
\end{itemize}
The first three properties hold if $\e$ is sufficiently small, the last one if $\phi$ is sufficiently generic.
Then $A'=\wt{\psi}(\wC)\cap (B\sm U)$ is a positively self-intersecting concordance from $\kt$ to $\kc$.
\end{proof}
\begin{remark}
The \level{} of the  positively self-intersecting concordance constructed in Lemma~\ref{lem:ktpsikc} 
is equal to $\delta_0-\sum_{j=1}^m\delta_m$. Indeed, each singular point $z$ of $A$ breaks up
into $\delta_z$ ordinary double points in $A'$ (hence,  sometimes the $\delta$--invariant is called the number of double points hidden at the given singular point;
compare \cite[Section~10]{Milnor-sing}).
\end{remark}

\subsection{Positively self-intersecting  concordances and 4-manifolds}\label{sec:psiconcordance}
We begin with the following result.
\begin{proposition}\label{prop:Wconstruct}
Suppose there exists a positively self-intersecting concordance from $K_0$ to  $K_1$ with \level~$p$.
Then for any $q\in\Z$ such that $q\neq 0,-4p$, 
there exists a smooth $4$--manifold $W_q$ with negative definite
intersection form, such that $W_q$ is a cobordism between $S^3_{q}(K_0)$ and $S^3_{q+4p}(K_1)$.
\end{proposition}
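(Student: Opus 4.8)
## Proof proposal for Proposition~\ref{prop:Wconstruct}

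The plan is to build $W_q$ by attaching handles to the cylinder $S^3 \times [0,1]$, guided directly by the positively self-intersecting concordance $A$. First I would tube the $p$ double points of $A$ to convert it into a genuinely embedded annulus at the cost of introducing extra homology. Concretely, at each positive double point one removes two small disks from the two intersecting branches and glues in a standard embedded tube; this is the usual ``resolving a node'' operation. Because each double point is positive, each such surgery is realized by attaching a 2-handle along a $0$-framed unknot (a meridian-type circle) in a way that the resulting embedded surface $\widehat{A}$ in the new 4-manifold, call it $V$, carries a sphere class of self-intersection... this is where one must be careful: the right picture is that $V = (S^3 \times [0,1]) \natural p\,(\overline{\CP}^2)$ (or a connect sum with $p$ copies of $S^2 \times S^2$-type pieces; I would use the $\overline{\CP}^2$ version because the double points are positive, so after orientation reversal of the cobordism direction one gets negative self-intersection spheres), and $\widehat{A}$ is a properly embedded annulus in $V$ from $K_0 \times \{0\}$ to $K_1 \times \{1\}$, each boundary now sitting in an $S^3$.

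Next I would do surgery on the two boundary knots. Attach a 2-handle to $V$ along $K_0 \times \{0\} \subset S^3 \times \{0\}$ with framing $q$, and simultaneously think of $\widehat{A}$ as a cobordism that lets us transport this; attaching a $q$-framed 2-handle along $K_1 \times \{1\}$ would a priori give $S^3_q(K_1)$, but the annulus $\widehat{A}$ geometrically connects the two surgery curves, and pushing the $K_0$-framed handle across $\widehat{A}$ changes the framing by (twice) the number of times... precisely, the square of the annulus class gets perturbed by the $p$ new $\overline{\CP}^2$ summands, each contributing $-1$ to a self-intersection count, so the framing on the $K_1$ end must be corrected by $4p$ to keep the handle attachments compatible across $\widehat{A}$. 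This is exactly the source of the $q \mapsto q+4p$ shift. I would make this rigorous by writing $W_q$ as $V$ with two 2-handles attached, one along $K_0 \times \{0\}$ with framing $q$ and one along $K_1 \times \{1\}$ with framing $q+4p$, and then computing $\partial W_q = -S^3_q(K_0) \sqcup S^3_{q+4p}(K_1)$ directly from the handle decomposition (the $S^3 \times [0,1]$ part contributes nothing to the boundary beyond the two ends, and the resolving handles were attached along null-homologous-in-$S^3$ curves so they don't alter the two boundary 3-manifolds). Alternatively — and this is probably cleaner to write — I would cap off one end: form the trace of $q$-surgery on $K_0$, glue it to the resolved annulus exterior, and read off the other boundary component.

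The homological computation is then the heart of the matter. I would take the basis of $H_2(W_q;\Z)$ consisting of: the $p$ sphere classes $e_1,\dots,e_p$ from the resolutions (each with $e_i \cdot e_i = -1$), and one more class $F$ built from the core of the $K_0$-handle, the annulus $\widehat{A}$, and the core of the $K_1$-handle glued along the common boundary. Using $\widehat{A} \cdot \widehat{A}$ (which picks up $-p$ from passing near the resolution sites, since the original immersed annulus had $p$ positive double points counting $+p$ and the resolution flips the local contribution) together with the two framings $q$ and $q+4p$, one computes $F \cdot F$; the claim $q(q+4p) > 0 \Rightarrow$ negative definite should fall out because the self-intersection of $F$ works out to something like $-\tfrac{q(q+4p)}{\text{(something positive)}}$ after diagonalizing, or more simply: the intersection matrix in the $\{e_i, F\}$ basis is $\mathrm{diag}(-1,\dots,-1)$ extended by one row/column whose diagonal entry, after clearing the $e_i$'s, is negative precisely when $q$ and $q+4p$ have the same sign. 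I would verify negative-definiteness by exhibiting the matrix explicitly and completing the square. The main obstacle I anticipate is bookkeeping the framings and orientations through the resolution step — getting the ``$4p$'' and not ``$2p$'' or ``$p$'', and making sure the sign of each resolution handle matches the positivity hypothesis on the double points; I would pin this down with a careful local model of a single positive double point (one $\overline{\CP}^2$ summand, one framing change of $4$) and then invoke additivity over the $p$ double points.
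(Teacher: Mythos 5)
Your middle step---blowing up the $p$ positive double points so that the strict transform of $A$ becomes an embedded annulus in $(S^3\times[0,1])\#\,p\,\overline{\CP^2}$, with the $q\mapsto q+4p$ shift coming from the drop in relative self-intersection---is exactly the paper's construction (though your opening description of ``resolving each node by a $0$-framed tube'' is a different operation: smoothing a double point changes the genus of the surface, whereas blowing up preserves the annulus; you eventually land on the correct $\overline{\CP^2}$ picture, but the write-up conflates the two). The genuine gap is in your final step. Attaching $2$-handles along \emph{both} $K_0\times\{0\}$ (framing $q$) and $K_1\times\{1\}$ (framing $q+4p$) does yield a cobordism from $S^3_q(K_0)$ to $S^3_{q+4p}(K_1)$, but its intersection form is \emph{indefinite}: $b_2=p+2$, and the class represented by a Seifert surface of $K_1$ capped off by the core of the top handle has square $q+4p>0$. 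No amount of ``clearing the $e_i$'s'' can repair this, and indeed your proposed basis $\{e_1,\dots,e_p,F\}$ has only $p+1$ elements in a manifold of rank $p+2$, while the diagonal entry you hope becomes negative actually works out to $0$ with your own framings ($-q+(q+4p)-4p$). Your self-intersection bookkeeping is also internally inconsistent: each blown-up double point lowers $\widehat A\cdot\widehat A$ by $4$, not by $1$, which is what makes your $4p$ framing shift correct in the first place.

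What the paper does instead---and what your parenthetical ``alternatively, cap off one end and take the annulus exterior'' gestures at without carrying out---is to attach only the $K_1$-handle with framing $q+4p$, observe that the strict transform of $A$ together with the core of that handle is a smoothly embedded \emph{disk} $S_q$ with boundary $K_0\subset S^3\times\{0\}$ and self-intersection $(q+4p)-4p=q$, and then define $W_q$ as the complement of a tubular neighborhood of $S_q$. The boundary of the deleted neighborhood is precisely what supplies the $S^3_q(K_0)$ end, and the deletion is what kills the positive-square classes: $H_2(W_q)\cong\Z^p$ is spanned by classes orthogonal to $S_q$ (namely $E_j-E_{j+1}$ and $2E_0-qE_1$), the resulting intersection matrix has determinant $(-1)^p q(q+4p)$, and it is negative definite exactly when $q(q+4p)>0$. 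Without this deletion step your cobordism does not satisfy the hypothesis of the $d$-invariant inequality, so the proposition as you have argued it is not established.
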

\begin{remark}
In this paper we restrict ourselves to integer surgeries, although the approach can be generalized to rational surgeries as well. 
In the applications we will assume that $q$ is a large positive odd integer.
\end{remark}
\begin{proof}
Blow up all singular points of $A\subset S^3\times[0,1]$ (we perform standard, negative, blow-ups). We denote the resulting $4$--manifold  $X'$, 
which is diffeomorphic to $(S^3\times[0,1])\#\ol{\C P^2}\#\ldots\#\ol{\C P^2}$. Let $\wt{A}$ be the strict transform of $A$. It is a smoothly
embedded annulus in  $X'$.

To $X'$ we glue a $2$--handle along $K_1\subset S^3\times\{1\}$ with framing $q+4p$. In this way we obtain a manifold $X_q$ with boundary $S^3_{q+4p}(K_1)\#(-S^3)$.
The core of the $2$--handle with $\wt{A}$ added  forms a smooth surface $S_q\subset X_q$ with boundary $K_0$. Let $N_q$ be a tubular neighborhood of $S_q$ in $X_q$.
We define
\[W_q=\ol{X_q\sm N_q}.\]
The boundary of $W_q$ consists of $S^3_{q+4p}(K_1)$ on one end and  surgery along $K_0$ on the other. The surgery coefficient is the self-intersection
of $S_q$, which is equal to $q+4p-4p=q$, the term $-4p$ comes from the $p$ double points of $A$ (the self-intersection of $\wt{A}$ is $-4p$).
\end{proof}

We now  study homological properties of $W_q$. In what follows we omit the subscript $q$ and write $W$ for $W_q$, $X$ for $X_q$ and $S$ for $S_q$.
It is clear that $H_2(X)=\Z^{p+1}$, $\pi_1(X)=H_1(X)=0$, and  $H_3(X)=\Z$. The second 
homology of $X$ is spanned by the exceptional 
curves $E_1\cup\ldots,E_p$ of the blow-ups
and a surface $E_0$, which can be defined as a union of $S$ and a Seifert surface for $K_0$. These elements are linearly independent,
so they span $H_2(X;\Q)$. To show that
these elements actually span $H_2(X;\Z)$ we observe that $E_0^2=q$, $E_0\cdot E_i=2$, $E_i^2=-1$; a calculation then shows that  
the intersection matrix of the space spanned by
$E_0,E_1,\ldots,E_p$ has determinant $q+4p$, which up to a sign is equal to $|H_1(\p X)|$.  
By Corollary~\ref{cor:det1}, $E_0,E_1,\ldots,E_p$
generate $H_2(X;\Z)$.

The long exact sequence of the pair $(X,W)$ yields
\[H_3(X,W)\to H_2(W)\to H_2(X)\to H_2(X,W)\to H_1(W)\to 0.\]
By excision, we have $H_*(X,W)=H_*(N,\p N)=H_*(D^2,S^1)$. Hence $H_3(W)=\Z$ and $H_2(X,W)=\Z$. The map $H_2(X)\to H_2(X,W)\cong\Z$ can
be explicitly described.  Any $z \in H_2(X)$ represented by a surface $Z$ intersecting $S$ transversely is mapped to $Z\cdot S$ times the generator.    Thus each $E_i$, $i>0$, is mapped to twice the generator. Furthermore, $E_0$ gets mapped to $q$ times the generator. This proves the following
result. 

\begin{lemma}
If $q$ is odd, then $H_2(X)\to H_2(X,W)$ is surjective.
\end{lemma}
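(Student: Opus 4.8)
The plan is to read off surjectivity directly from the explicit computation of the map $H_2(X)\to H_2(X,W)\cong\Z$ that was assembled just above the statement, combined with the fact, already established, that $E_0,E_1,\dots,E_p$ generate $H_2(X;\Z)$ over the integers. The point is purely arithmetic once those two ingredients are in hand.

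First I would recall that, by the determinant calculation ($E_0^2=q$, $E_0\cdot E_i=2$, $E_i^2=-1$ giving intersection determinant $q+4p=\pm|H_1(\p X)|$) together with Corollary~\ref{cor:det1}, the classes $E_0,E_1,\dots,E_p$ form a generating set (indeed a basis) of $H_2(X;\Z)$. Consequently the image of the homomorphism $H_2(X)\to H_2(X,W)\cong\Z$ is exactly the subgroup of $\Z$ generated by the images of $E_0,E_1,\dots,E_p$.

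Next I would invoke the description of $H_2(X)\to H_2(X,W)$ as intersection number with $S$: each $E_i$ with $i>0$ maps to $2$ times the chosen generator, while $E_0$ maps to $q$ times the generator. Hence the image is the subgroup $\gcd(q,2)\,\Z\subseteq\Z$. When $q$ is odd, $\gcd(q,2)=1$, so the image is all of $\Z$ and the map is surjective. If one prefers an explicit preimage of the generator, note that since $q$ is odd one has $q-\tfrac{q-1}{2}\cdot 2=1$, so the class $E_0-\tfrac{q-1}{2}E_1\in H_2(X)$ maps to the generator of $H_2(X,W)$.

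There is essentially no obstacle here; the only subtlety worth flagging is that this argument genuinely uses that $E_0,\dots,E_p$ span $H_2(X;\Z)$ and not merely $H_2(X;\Q)$, since surjectivity of a homomorphism of abelian groups is detected on a set of integral generators. That integrality is precisely what the determinant computation plus Corollary~\ref{cor:det1} provides, so the proof is complete after assembling these observations.
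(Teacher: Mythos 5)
Your proof is correct and follows exactly the paper's argument: the preceding discussion identifies the map $H_2(X)\to H_2(X,W)\cong\Z$ as intersection with $S$, sending $E_i\mapsto 2$ for $i>0$ and $E_0\mapsto q$, so oddness of $q$ gives $\gcd(2,q)=1$ and hence surjectivity. Your explicit preimage $E_0-\tfrac{q-1}{2}E_1$ and the remark about needing integral (not just rational) generators are nice clarifications but do not change the route.
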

From now on, we shall assume that $q$ is odd. Then
\[H_2(W)\cong \Z^p,\ \ H_1(W)\cong 0.\]
The classes $\alpha_j=E_j-E_{j+1}$ for $j=1,\ldots,p-1$ and $\alpha_0=2D-qE_1$ all intersect $S$ trivially, hence they
belong to the kernel of $H_2(X)\to H_2(X,W)$.
In particular they can be regarded as classes in $H_2(W)$.
We claim that these classes generate $H_2(W)$. For dimensional reasons, they 
generate $H_2(W;\Q)$. We  write the intersection matrix of $\alpha_0,\ldots,\alpha_{p-1}$ as
\begin{equation}\label{eq:matrixA}
\mathcal{A}=
\bp
-q^2&q&0&0&\ldots &0\\
q&-2&1&0&\ldots &0\\
0&1&-2&1&\ldots &0\\
0&0&1&-2&\ldots &0\\
\vdots&\vdots&\vdots&\vdots&\ddots&\vdots\\
0&0&0&0&\ldots&-2
\ep
\end{equation}
The determinant is easily seen to be $(-1)^pq(q+4p)$. Corollary~\ref{cor:det1} implies that $\alpha_0,\ldots,\alpha_p$ is a basis of $H_2(W;\Z)$.

We describe the dual basis in $H^2(W)$.  Let $Z'\subset X$ denote the cocore of the $2$--handle and let $Z=Z'\sm N$. 
We let $\beta_0=E_1+E_2+\ldots+E_p-\frac{q+4p-1}{2}Z$
and $\beta_j=E_j+E_{j+1}+\ldots+E_p$ 
for $j=1,2,\ldots,p$. It is clear that $\beta_i\cdot \alpha_j=1$ if $i=j$ and $0$ otherwise. In other words, $\beta_0,\ldots,\beta_p$
is the dual basis for $\alpha_0,\ldots,\alpha_p$. By Lemma~\ref{lem:det2}, 
the intersection map $H^2(W)\times H^2(W)\to\Q$ is given with respect to the  basis $\beta_0,\ldots,\beta_p$
by $\mathcal{A}^{-1}$. In particular
the self-intersection of $\beta_0\in H^2(W)$ is  
\be\label{eq:bo2}
\beta_0^2=\frac{-p}{q(q+4p)}.
\ee

\subsection{\spinc{} structures on $W_q$ and $S^3_{q}(K_0)$, $S^3_{q+4p}(K_1)$}\label{sec:spinc}
We begin by recalling the convention of \cite[Section 3.4]{OzSz04} regarding \spinc{} structures on surgeries on knots. Let $J\subset S^3$
be a knot and $d\in\Z_{>0}$. Let $M=S^3_d(J)$ and let $\O$ be a $4$--manifold arising after adding a $2$--handle to a ball $B^4$ along $J$ with
framing $d$, so that $\p\O=M$. Let $\Sigma$ be the generator of $H_2(\O;\Z)$. We can regard $\Sigma$ as a union of a Seifert surface for $J$ with the core
of the $2$--handle.
\begin{definition}\label{def:m}
For $m\in[-d/2,d/2)$, the \spinc{} structure $\sss_m$ on $M$ is the unique \spinc{} structure that  extends over $\O$ to a \spinc{} structure $\sst_m$
satisfying $\langle c_1(\sst_m),\Sigma\rangle +2m=d$.
\end{definition}

Consider the following commutative diagram
\[\xymatrix{
H^2(\O;\Z)\ar[d]^{\simeq}\ar[r]& H^2(M;\Z)\ar[d]^{\simeq}\\
H_2(\O,\p\O;\Z)\ar[r]&H_1(M;\Z),
}
\]
where the vertical arrows are given by Poincar\'e duality and the horizontal maps are from the long exact sequences of pairs.
The second cohomology group $H^2(\O;\Z)$ is generated by the dual to the class $[Z]$ of the cocore of the $2$--handle.
The image of $[Z]$ in $H_1(M;\Z)$ is the meridian $\mu$ of the knot $J$.

Suppose that $d$ is odd, so $H^2(M)$ has no 2--torsion. The first Chern class determines  a unique \spinc{} structure on $M$.
For any $m\in[-d/2,d/2)$ we have $c_1(\sst_m)=(d-2m)[Z]$. Its image in $H^2(M;\Z)$ is Poincar\'e dual to $(d-2m)\cdot\mu$.
Since the map $H^2(\O;\Z)\to H^2(M;\Z)$ is surjective, any element of type $k\mu\in H_1(M;\Z)$ for $k\in\Z$ determines
a \spinc{} structure $\sst_m$ on $\O$ for some $m$. More precisely, we have the following lemma.

\begin{lemma}\label{lem:meridianspin}
For any $k\in\Z$, the class $k\mu\in H_1(M)$ determines the \spinc{} structure $\sss_m$, where $m\in[-d/2,d/2)$ and    $(d-2m)\equiv k\bmod d$.
\end{lemma}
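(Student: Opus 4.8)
The plan is to unwind Definition~\ref{def:m} through the commutative Poincar\'e duality square displayed above, the only conceptual point being that when $d$ is odd a \spinc{} structure on $M$ is determined by its first Chern class. Indeed, $H_1(M;\Z)$ is cyclic of order $d$, generated by the meridian $\mu$, so $H^2(M;\Z)$ (identified with $H_1(M;\Z)$ by Poincar\'e duality) is cyclic of odd order and in particular has no $2$--torsion. Since two \spinc{} structures on $M$ differing by $a\in H^2(M;\Z)$ have first Chern classes differing by $2a$, and $2a=0$ forces $a=0$, the map $\sss\mapsto c_1(\sss)$ is injective on \spinc{} structures of $M$. Thus it is enough to compute $c_1(\sss_m)$ and to check that it equals the Poincar\'e dual of $k\mu$.

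For the computation, recall that $H^2(\O;\Z)\cong\Z$ is generated by the (Poincar\'e--Lefschetz) dual of the cocore class $[Z]$, and that $[Z]\cdot\Sigma=1$ because the cocore meets the core of the $2$--handle once and is disjoint from the Seifert surface part of $\Sigma$. Hence Definition~\ref{def:m}, which asks for $\langle c_1(\sst_m),\Sigma\rangle=d-2m$, forces $c_1(\sst_m)=(d-2m)\,\mathrm{PD}[Z]$. Running this class through the commutative square above --- whose vertical arrows are Poincar\'e duality isomorphisms and whose top arrow is the restriction $H^2(\O;\Z)\to H^2(M;\Z)$ --- and using the stated fact that $[Z]\in H_2(\O,\p\O;\Z)$ maps to $\mu\in H_1(M;\Z)$, we conclude that $c_1(\sss_m)=c_1(\sst_m)|_M$ is Poincar\'e dual to $(d-2m)\mu\in H_1(M;\Z)$.

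Now if $d-2m\equiv k\pmod d$, then $(d-2m)\mu=k\mu$ in $H_1(M;\Z)\cong\Z/d$, so $c_1(\sss_m)$ is the Poincar\'e dual of $k\mu$; by the first paragraph this singles out $\sss_m$ uniquely, which is the precise sense in which $k\mu$ ``determines'' the \spinc{} structure $\sss_m$. Finally, since $d$ is odd the residue $2$ is invertible modulo $d$, so $m\mapsto d-2m$ is a bijection from the $d$ integers of $[-d/2,d/2)$ onto $\Z/d$; hence for each $k$ there is exactly one admissible $m$, and the statement is well posed. I do not expect a genuine obstacle here: the lemma is a bookkeeping statement, and the only thing that needs care is the assertion --- recalled just before the lemma --- that the generator of $H^2(\O;\Z)$ restricts to a generator of $H^2(M;\Z)$, equivalently that $[Z]$ maps to the generator $\mu$ of $H_1(M;\Z)$.
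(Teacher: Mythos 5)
Your proof is correct and follows essentially the same route as the paper, which establishes the lemma via the discussion immediately preceding it: the commutative Poincar\'e duality square, the identification $c_1(\sst_m)=(d-2m)\,\mathrm{PD}[Z]$ restricting to the dual of $(d-2m)\mu$, and the fact that for odd $d$ the group $H^2(M;\Z)$ has no $2$--torsion so $c_1$ determines the \spinc{} structure. Your added remark that $m\mapsto d-2m$ is a bijection onto $\Z/d$ is a harmless bit of extra bookkeeping.
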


\smallskip
We return to considering  $W_q$. For any odd $r\in\Z$, the class
\begin{equation}\label{eq:cr}
c_r=r\beta_0\in H^2(W_q)
\end{equation}
is    characteristic, so it
determines a \spinc{} structure on $W_q$, denoted $\sst_r$, such that $c_1(\sst_r)=c_r$. 
We now compute the restriction of $\sst_r$ to $S^3_q(K_0)$ and $S^3_{q+4p}(K_1)$.
Geometrically we have
\begin{align*}
\beta_0\cap S^3_{q+4p}(K_1)&=-\frac{q+4p-1}{2}\mu_1\\
\beta_0\cap S^3_{q}(K_0)&=-\frac{q-1}{2}\mu_0,
\end{align*}
where $\mu_1$ and $\mu_0$ are (images of) meridians of $K_1$ and $K_0$ in $S^3_{q+4p}(K_1)$ and $S^3_{q}(K_0)$, respectively. Strictly speaking,
we should write $\beta_0\cap -S^3_q(K_0)=-(-\frac{q-1}{2})\mu_0$, because the orientation of $S^3_q(K_0)\subset W_q$ is reversed, but the result is the same.

Combining the   observation above with Lemma~\ref{lem:meridianspin}, we obtain the following relation.
\begin{lemma}\label{lem:sss}
If $r$ satisfies the following constraints:
\begin{itemize}
\item $q\equiv r\bmod 4$ 
\item $\frac{q+4p-r}{4}\in[-(q+4p)/2,(q+4p)/2)$
\item $\frac{q-r}{4}\in[-q/2,q/2)$, 
\end{itemize} 
then $\sst_r$ restricts to the class $\sss_{m_1}$ on $S^3_{q+4p}(K_1)$
and $\sss_{m_0}$ on $S^3_{q}(K_0)$ where
\begin{align*}
m_1&=\frac{q-r}{4}+p\\
m_2&=\frac{q-r}{4}.
\end{align*}
\end{lemma}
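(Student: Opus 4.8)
The plan is to chase the first Chern class $c_1(\sst_r)=c_r=r\beta_0$ through the restriction maps $H^2(W_q)\to H^2(S^3_{q+4p}(K_1))$ and $H^2(W_q)\to H^2(S^3_{q}(K_0))$, and then match the resulting classes against the enumeration of \spinc{} structures supplied by Lemma~\ref{lem:meridianspin}. Because $q$ is odd, both surgery coefficients $q$ and $q+4p$ are odd, so on each boundary component the first Chern class pins down the \spinc{} structure uniquely and Lemma~\ref{lem:meridianspin} applies verbatim; also $c_r$ is characteristic, so $\sst_r$ exists as claimed in \eqref{eq:cr}.

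First I would record the restrictions of $\beta_0$. By definition of the pairing $\beta_0\cap(\cdot)$ computed just before the lemma, the Poincar\'e dual in $H_1$ of the restriction of $\beta_0$ is $-\frac{q+4p-1}{2}\mu_1$ on $S^3_{q+4p}(K_1)$ and $-\frac{q-1}{2}\mu_0$ on $S^3_{q}(K_0)$, the sign on the $K_0$--end reflecting the reversed boundary orientation of $S^3_q(K_0)\subset W_q$ but not affecting the resulting \spinc{} structure. Multiplying by $r$, the restriction of $c_r$ is Poincar\'e dual to $k_1\mu_1$ with $k_1=-r\frac{q+4p-1}{2}$ on the first end and to $k_0\mu_0$ with $k_0=-r\frac{q-1}{2}$ on the second.

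Next I would solve the congruence of Lemma~\ref{lem:meridianspin}. Write $d=q+4p$ for the first end. Since $d$ is odd, $2\cdot\frac{d-1}{2}=d-1\equiv-1\bmod d$, so $\frac{d-1}{2}\equiv-2^{-1}\bmod d$ and hence $k_1\equiv r\,2^{-1}\bmod d$. The requirement $(d-2m_1)\equiv k_1\bmod d$ then becomes, after multiplying by $2$, the congruence $4m_1\equiv -r\equiv d-r\bmod d$; as $\gcd(4,d)=1$ this has a unique solution modulo $d$, namely $m_1=\frac{q+4p-r}{4}=\frac{q-r}{4}+p$. The hypothesis $q\equiv r\bmod4$ makes this an integer, and $\frac{q+4p-r}{4}\in[-(q+4p)/2,(q+4p)/2)$ identifies it with the canonical representative required in Definition~\ref{def:m}; hence $\sst_r$ restricts to $\sss_{m_1}$. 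The identical computation with $d=q$ on the other end gives $4m_0\equiv-r\bmod q$, so $m_0=\frac{q-r}{4}$, an integer by $q\equiv r\bmod4$ and in range by the third bullet, so $\sst_r$ restricts to $\sss_{m_0}$ there.

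The argument is essentially bookkeeping once the conventions are fixed; the only delicate point is the orientation of the $S^3_q(K_0)$ boundary component together with the precise Poincar\'e--duality convention relating $c_1$ of a restricted \spinc{} structure to the class $\beta_0\cap M$, since a sign slip there shifts $m$ by an amount invisible modulo small integers but still changes the answer. I would therefore verify carefully that the orientation reversal contributes exactly the sign flagged in the discussion preceding the lemma and that it yields the same \spinc{} structure, so that both ends are governed by the one congruence $4m\equiv -r$.
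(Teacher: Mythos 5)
Your proposal is correct and follows exactly the route the paper intends: the paper derives Lemma~\ref{lem:sss} in one line by combining the displayed formulas for $\beta_0\cap S^3_{q+4p}(K_1)$ and $\beta_0\cap S^3_q(K_0)$ with Lemma~\ref{lem:meridianspin}, and your congruence bookkeeping ($4m\equiv -r\bmod d$, with the hypotheses guaranteeing integrality and the canonical representative in $[-d/2,d/2)$) is precisely the omitted verification. The computation checks out on both boundary components.
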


\subsection{$d$--invariants}\label{sec:d}

\noindent  Proposition~\ref{prop:d-compute}, together with the construction of the manifold $W_q$, gives the following general result.
\begin{theorem}\label{prop:dinv}
Suppose that $K_0$ and $K_1$ are connected sums of algebraic knots and there exists a positively self-intersecting concordance from  $K_0$  to $K_1$ with \level~$p$. Then for any $m\in\Z$ we have
\[I_{{K_1}}(m+g(K_1)+p)\le I_{K_0}(m+g(K_0)),\]
where $I$ is the gap function defined in Section~\ref{sec:surg} and $g(K)$ denotes the three genus of the knot $K$.
\end{theorem}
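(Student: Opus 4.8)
The plan is to combine the cobordism $W_q$ constructed in Proposition~\ref{prop:Wconstruct} with the $d$--invariant inequality of Proposition~\ref{prop:fundbound} and the explicit formula of Proposition~\ref{prop:d-compute}. First I would fix a large odd integer $q$, so large that $q > 2g(K_0)$ and $q+4p > 2g(K_1)$; since $q$ and $q+4p$ are then positive, $q(q+4p)>0$ and the intersection form on $H_2(W_q)$ is negative definite, as established in the homological analysis of Section~\ref{sec:psiconcordance}. Given $m \in \Z$, I would choose the characteristic class $c_r = r\beta_0 \in H^2(W_q)$ for the appropriate odd $r$ dictated by Lemma~\ref{lem:sss}, namely the one for which $r \equiv q \bmod 4$ and $(q-r)/4 = m$; then $\sst_r$ restricts to $\sss_{m+p}$ on $S^3_{q+4p}(K_1)$ and to $\sss_m$ on $S^3_q(K_0)$ (up to adjusting $m$ to lie in the required half-open interval, which costs nothing since we may take $q$ arbitrarily large).

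Next I would feed this into Proposition~\ref{prop:fundbound}, with $M_1 = S^3_q(K_0)$, $M_2 = S^3_{q+4p}(K_1)$, and $W = W_q$ oriented so that $\partial W_q = S^3_{q+4p}(K_1) \cup -S^3_q(K_0)$. This gives
\[
d(S^3_{q+4p}(K_1),\sss_{m+p}) - d(S^3_q(K_0),\sss_m) \ge \tfrac14\left(c_1^2(\sst_r) - 3\sigma(W_q) - 2\chi(W_q)\right).
\]
Now I would substitute. On the left, Proposition~\ref{prop:d-compute} (applied with $s = q+4p > 2g(K_1)$ and with $s = q > 2g(K_0)$) expresses both $d$--invariants in terms of the gap functions $I_{K_1}$ and $I_{K_0}$ plus explicit rational quadratic corrections. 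On the right, $c_1^2(\sst_r) = r^2 \beta_0^2 = -r^2 p / (q(q+4p))$ by \eqref{eq:bo2}, while $\sigma(W_q) = -p$ (the form is negative definite of rank $p$) and $\chi(W_q) = p$ (it is homotopy equivalent to a wedge of $p$ two-spheres, or one reads it off from $b_0=b_1=0$, $b_2=p$, $b_3=b_4=0$).

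The main obstacle — though it is a computational rather than a conceptual one — is verifying that after all substitutions the rational terms on the two sides cancel exactly, leaving precisely the clean inequality $-2I_{K_1}(m+g(K_1)+p) \ge -2I_{K_0}(m+g(K_0))$, i.e.\ $I_{K_1}(m+g(K_1)+p) \le I_{K_0}(m+g(K_0))$. Concretely one must check that
\[
\frac{(2(m+p)-(q+4p))^2 + (q+4p)}{4(q+4p)} - \frac{(2m-q)^2+q}{4q} \;=\; \frac14\left(-\frac{r^2 p}{q(q+4p)} + 3p - 2p\right)
\]
with $r = q-4m$; this is a finite algebraic identity in $m,p,q$ that I expect to hold identically, so that the $q$--dependence drops out entirely and the surgery parameter plays only an auxiliary role. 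Once this identity is confirmed, the inequality is independent of $q$ and the theorem follows for all $m \in \Z$. I would also remark that the hypothesis that $K_0, K_1$ are connected sums of algebraic knots enters only through Proposition~\ref{prop:d-compute}, which is why, as noted in the remark following the theorem statement, the argument extends verbatim to connected sums of $L$--space knots.
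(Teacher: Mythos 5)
Your proposal is correct and follows essentially the same route as the paper's proof: construct $W_q$ for a large odd $q$ with $q=4m+r$, use Lemma~\ref{lem:sss} to identify the restricted \spinc{} structures, apply Proposition~\ref{prop:fundbound} with $c_1(\sst_r)^2=-r^2p/(q(q+4p))$, $\chi(W_q)=p$, $\sigma(W_q)=-p$, and substitute Proposition~\ref{prop:d-compute}. The algebraic identity you flag does hold (both sides reduce to $p(2mq+pq-4m^2)/(q(q+4p))$ after the constant $\pm\frac14$ terms cancel), so the $q$--dependence drops out exactly as you expect.
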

\begin{proof}
Choose $m\in\Z$ and fix $r$ large enough and odd so that $4m+r>2g(K_0)$ and  $4m+r+4p>2g(K_1)$. We set $q=4m+r$. We may assume that $|m|<q/2$, otherwise
we can further  increase  $r$ and $q$. By Lemma~\ref{lem:sss}, 
the \spinc{} structure $\sst_r$ on $W_q$ restricts to the \spinc{} structure $\sss_{m+p}$ on $S^3_{q+4p}(K_1)$
and $\sss_m$ on $S^3_q(K_0)$. We now  use  the fundamental inequality between $d$--invariants stated in Proposition~\ref{prop:fundbound}:
\[d(S^3_{q+4p}(K_1),\sss_{m+p})-d(S^3_{q}(K_0),\sss_m)\ge \frac{c_1(\sst_r)^2-2\chi(W_q)-3\sigma(W_q)}{4}.\]
Since $c_1(\sst_r)=r\beta_0$, we have by \eqref{eq:bo2}
\[c_1(\sst)^2=\frac{-r^2p}{q(q+4p)}.\]
Moreover, $\chi(W_q)=p$ and $\sigma(W_q)=-p$. Substituting \eqref{eq:dofsurg} we obtain
\begin{multline*}
\left(-2I_{K_1}(m+g(K_1)+p)+\frac{(-2(m+p)+q+4p)^2-(q+4p)}{4(q+4p)}\right)-\\
-\left(-2I_{K_2}(m+g(K_0))+\frac{(-2m+q)^2-q}{4q}\right)\ge\\
\ge\frac{-r^2p}{4q(q+4p)}+\frac{p}{4}.
\end{multline*}
A straightforward calculation shows that the inequality simplifies to
\[-2I_{K_1}(m+g(K_1)+p)+2I_{K_0}(m+g(K_0))\ge 0.\]
\end{proof}

\begin{remark}\label{rem:smooth}
Before considering applications of these results to deformations of singular points, we observe that this  simple inequality belongs to the smooth world: it depends on  the fact that $A$ is a smoothly immersed annulus, or equivalently, that $W_q$ is a smooth, not just a topological, manifold.
Take $K_0$ to be the unknot, $K_1$ to be an algebraic knot of genus $g=g(K_1)$, and let  $m=0$.
Theorem~\ref{prop:dinv} implies that if there exists a positively self-intersecting concordance from  $K_0$   to $K_1$ with \level~$p$, then
\[I_{K_1}(g+p)=0.\]
The largest element that is  not in the semigroup is $2g-1$; see Lemma~\ref{lem:semiprop}. It follows that $I_{K_1}(2g-1)=1$, so $p>g-1$.
If we combine this with Example~\ref{ex:cogo}, then it follows that an algebraic knot cannot be unknotted in less than $g$ unknotting moves;
this result  was first proved by Kronheimer and Mrowka \cite{KM} and Heegaard Floer proofs are well-known~\cite{Liv, OS,Sar}.  

Note that there are algebraic knots (in fact torus knots) that bound topological locally flat immersed disks in $B^4$ with the number of double points less than the unknotting number of the knot. In fact, one can show that algebraic unknotting number (see \cite{Sae} for definition) of some torus knots is actually less than
the unknotting number.  It is also known that the topological four genus of torus knots is sometimes less than the smooth four genus; see \cite[Section 5]{Rud}
and references therein.  This shows the value of working in the smooth category, as opposed to topological locally flat category, in studying deformations of singularities from a topological perspective.
\end{remark}

We now prove the main  theorem of the present article.

\begin{theorem}[Semicontinuity of semigroups]\label{thm:main}
Assume that there exists a $\delta$--constant deformation of a cuspidal singular point $z_0$, such that the general fiber
has among its singularities cuspidal singular points $z_1,\ldots,z_n$. Let $S_0,S_1,\ldots,S_n$ be the corresponding semigroups.
Then for any $m\ge 0$ and for any nonnegative $m_1,\ldots,m_n$ such that $m_1+\ldots+m_n=m$, we have
\[\# S_0\cap [0,m)\le \sum_{j=1}^n\# S_j\cap [0,m_j).\]
\end{theorem}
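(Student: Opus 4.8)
The plan is to translate the algebraic input into topology via Lemma~\ref{lem:ktpsikc}, and then feed the resulting positively self-intersecting concordance into Theorem~\ref{prop:dinv}. Concretely, the $\delta$--constant deformation of $z_0$ with cuspidal singularities $z_1,\ldots,z_n$ on the general fiber gives, by Lemma~\ref{lem:ktpsikc}, a positively self-intersecting concordance from $K_{tot}=K_{z_1}\#\cdots\#K_{z_n}$ to $K_{cen}$ (the link of $z_0$), whose \level\ equals $p:=\delta_0-\sum_{j=1}^n\delta_j$. Since the deformation is $\delta$--constant and the general fiber may have \emph{other} singular points besides the $z_j$ (those also contribute nonnegative $\delta$--invariants), Lemma~\ref{lem:sumofdelta} gives $\sum_{j=1}^n\delta_j\le\delta_0$, hence $p\ge 0$; this is exactly what is needed to apply the previous results. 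Recall that for cuspidal singularities $\delta_j=g(K_{z_j})$ and, because genus is additive under connected sum, $g(K_{tot})=\sum_{j=1}^n g(K_{z_j})=\sum_{j=1}^n\delta_j$, while $g(K_{cen})=\delta_0$; therefore $p=g(K_{cen})-g(K_{tot})$.

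The second step is to invoke Theorem~\ref{prop:dinv} with $K_0=K_{tot}$ and $K_1=K_{cen}$: for every $m'\in\Z$,
\[
I_{K_{cen}}\bigl(m'+g(K_{cen})+p\bigr)\le I_{K_{tot}}\bigl(m'+g(K_{tot})\bigr).
\]
Using $g(K_{cen})+p=2g(K_{cen})-g(K_{tot})$ on the left and relabelling, one gets, after shifting the variable, a clean comparison between $I_{K_{cen}}$ evaluated near $2\delta_0$ and $I_{K_{tot}}$ evaluated near $g(K_{tot})$. The remaining task is purely combinatorial: rewrite everything in terms of the semigroups. I would use the following elementary identities. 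For a single semigroup $S_j$ of a cuspidal singularity with $\delta_j=g(K_{z_j})$, Lemma~\ref{lem:semiprop}(c),(d) gives $I_{S_j}(k)=\#\{x\ge k:x\notin S_j\}$, and for $0\le k\le 2\delta_j$ one has the symmetry $I_{S_j}(k)=\delta_j-\#\bigl(S_j\cap[0,k)\bigr)-\tfrac{1}{2}\bigl(k-\#\{j'<k\}\cdots\bigr)$; more cleanly, $\#\bigl(S_j\cap[0,k)\bigr)=k-I_{S_j}(0)+I_{S_j}(k)$ for all $k\ge 0$ since $I_{S_j}(0)=\delta_j$ counts the gaps and $\#(S_j\cap[0,k))+\#(\text{gaps in }[0,k))=k$. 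Likewise $I_{K_{tot}}=I_{S_1}\diamond\cdots\diamond I_{S_n}$ unwinds, via the definition of infimum convolution, into exactly the minimum over partitions $m_1+\cdots+m_n=\cdot$ that appears in the statement. Substituting these identities into the inequality from Theorem~\ref{prop:dinv} and choosing $m'$ so that the shifted argument equals $m$ yields precisely
\[
\#\bigl(S_0\cap[0,m)\bigr)\le\sum_{j=1}^n\#\bigl(S_j\cap[0,m_j)\bigr)
\]
for every partition $m_1+\cdots+m_n=m$ with all $m_j\ge 0$.

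The main obstacle I anticipate is bookkeeping rather than conceptual: one must be careful that the arguments of the gap functions stay in the range $[0,2g]$ where the semigroup symmetry of Lemma~\ref{lem:semiprop}(d) applies, and that the relabelling of variables between the ``$I$--language'' (gaps counted from above, shifted by genus) and the ``$\#S\cap[0,m)$ language'' (elements counted from below) is done consistently for \emph{each} $S_j$ simultaneously. In particular, translating the single inequality $I_{K_{cen}}(m'+g(K_{cen})+p)\le I_{K_{tot}}(m'+g(K_{tot}))$ into the family of inequalities indexed by partitions requires observing that $I_{K_{tot}}(m'+g(K_{tot}))\le\sum_j I_{S_j}(m'_j+g(K_{z_j}))$ for \emph{any} choice of $m'_j$ with $\sum m'_j=m'$ — this is immediate from the definition of $\diamond$ as an infimum — and then converting each term. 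Once the dictionary is set up, the verification is a routine calculation, which is why I would relegate it to a short lemma on semigroups and then apply it mechanically.
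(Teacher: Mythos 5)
Your proposal is correct and follows essentially the same route as the paper's proof: Lemma~\ref{lem:ktpsikc} produces the positively self-intersecting concordance from $K_{z_1}\#\cdots\#K_{z_n}$ to the central link with double point count $\delta_0-\sum\delta_j=g(\kc)-g(\kt)$, Theorem~\ref{prop:dinv} is applied at the shifted argument $-m+g(\kt)$, the infimum convolution splits $I_{\kt}$ over the partitions $m_1+\cdots+m_n=m$, and the semigroup symmetry of Lemma~\ref{lem:semiprop}(d) converts each gap-function value into $\#\,S_j\cap[0,m_j)$. The only difference is cosmetic: where you use the complementary-counting identity $\#\,(S\cap[0,k))=k-I(0)+I(k)$, the paper evaluates directly $I_{K_j}(-m_j+2g(K_j))=\#\,S_j\cap[0,m_j)$ via the symmetry, but both are equivalent bookkeeping.
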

\begin{proof}
Let $\kc$ be the link of singularity at $z_0$ and let $K_1,\ldots,K_n$ be the links of singular points $z_1,\ldots,z_n$. Let $\kt=K_1\#\ldots\# K_n$.
By Lemma~\ref{lem:ktpsikc}, there is a positively self-intersecting concordance from $\kt$ to $\kc$. The \level{} of that concordance is equal to $\delta_0-\sum\delta_j$,
which is equal to $g(\kc)-g(\kt)$. Theorem~\ref{prop:dinv} applied to $-m+g(\kt)$ implies that
\[I_{\kc}(-m+2g(\kc))\le I_{\kt}(-m+2g(\kt)).\]
From the definition of the infimum convolution, using the fact that $g(\kt)=\sum g(K_j)$, we obtain
\[I_{\kc}(-m+2g(\kc))\le \sum_{j=1}^n I_{K_j}(-m_j+2g(K_j)).\]
But now, by Lemma~\ref{lem:semiprop}(d), we have $I_{K_j}(-m_j+2g(K_j))=\# S_j\cap[0,m_j)$ and $I_{\kc}(-m+2g(\kc))=\# S_0\cap[0,m)$.
\end{proof}

\subsection{Minimal unknotting sequences of torus knot}\label{sec:baader}
Sebastian Baader pointed out the  application presented in this section. Unknotting sequences of torus knots were investigated in \cite{Ba0,Ba}. 

\begin{theorem} \label{thm:unknot} If  the  torus knot $T(a,b)$ with $0< a<b$ appears in some minimal unknotting sequence for the torus knot $T(c,d)$  with $0< c<d$, then $a \le c$.
\end{theorem}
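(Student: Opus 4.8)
The plan is to reduce the statement about unknotting sequences to an application of Theorem~\ref{thm:main} (or more directly Theorem~\ref{prop:dinv}) via a chain of elementary observations about what it means for $T(a,b)$ to lie in a \emph{minimal} unknotting sequence for $T(c,d)$. First I would recall that an unknotting sequence $T(c,d)=J_N\to J_{N-1}\to\cdots\to J_0=\text{unknot}$ of length $N=u(T(c,d))$ being minimal forces each step $J_{k+1}\to J_k$ to be a single crossing change, and forces the portion from $T(c,d)$ down to $T(a,b)$ to realise the Gordian distance, i.e.\ $u(T(c,d))=d_G(T(c,d),T(a,b))+u(T(a,b))$. The crucial point is that for torus knots the unknotting number equals the genus (Kronheimer--Mrowka, recovered here in Remark~\ref{rem:smooth}), so $u(T(c,d))=g(T(c,d))$ and $u(T(a,b))=g(T(a,b))$, hence the number of crossing changes taking $T(c,d)$ to $T(a,b)$ in the sequence is exactly $g(T(c,d))-g(T(a,b))$.

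Next I would extract from such a minimal sequence a positively self-intersecting concordance. A crossing change realised by changing a negative crossing to a positive produces, by the standard construction of Example~\ref{ex:cogo}, a positively self-intersecting concordance with \level\ one; concatenating the segment of the sequence between $T(c,d)$ and $T(a,b)$ gives a positively self-intersecting concordance from $T(a,b)$ to $T(c,d)$ with \level\ $p=g(T(c,d))-g(T(a,b))$. (One needs the crossing changes to be of the correct sign; here the observation is that in a sequence realising the Gordian distance from a positive knot like $T(c,d)$ down to $T(a,b)$ the relevant crossing changes can be taken positive-to-negative so that, read in the upward direction from $T(a,b)$ to $T(c,d)$, they become the negative-to-positive changes required by the definition — alternatively one uses that $T(a,b)$ is a positive knot and both endpoints have $u=g$, which pins down the signs.)

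Now apply Theorem~\ref{prop:dinv} with $K_0=T(a,b)$, $K_1=T(c,d)$, $p=g(K_1)-g(K_0)$, at the value $m=-g(K_0)$. Exactly as in the proof of Theorem~\ref{thm:main} this yields
\[
I_{T(c,d)}\bigl(2g(T(c,d))-g(T(a,b))-g(T(a,b))\bigr)=I_{T(c,d)}\bigl(2g(T(c,d))-2g(T(a,b))\bigr)\le I_{T(a,b)}(0),
\]
and $I_{T(a,b)}(0)=\#\bigl(\Z_{\ge0}\sm S_{a,b}\bigr)=g(T(a,b))$ by Lemma~\ref{lem:semiprop}. So the semigroup $S_{c,d}=\langle c,d\rangle$ must omit at most $g(T(a,b))$ integers that are $\ge 2g(T(c,d))-2g(T(a,b))$; equivalently, translating through Lemma~\ref{lem:semiprop}(d), $\#\bigl(S_{c,d}\cap[0,2g(T(a,b)))\bigr)\le \#\bigl(S_{a,b}\cap[0,2g(T(a,b)))\bigr)=g(T(a,b))$ forces a genuine constraint on $\langle c,d\rangle$. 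The final step is purely number-theoretic: the smallest positive element of $\langle c,d\rangle$ is $c$, so $\#\bigl(\langle c,d\rangle\cap[0,m)\bigr)$ grows slowly for small $m$, and comparing this count against $\#\bigl(\langle a,b\rangle\cap[0,m)\bigr)$ (whose smallest positive element is $a$) at an appropriate threshold $m$ forces $a\le c$ — intuitively, a semigroup with a larger smallest generator has fewer elements in an initial interval, so the inequality can only hold if $a\le c$.

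The main obstacle I expect is the careful bookkeeping of \emph{signs} and of which knot plays the role of $K_0$ versus $K_1$: the definition of positively self-intersecting concordance is asymmetric, and the construction from Example~\ref{ex:cogo} goes from the unknot (or the ``simpler'' knot) \emph{up} to the more complicated one by changing negative crossings to positive, so one must verify that a minimal unknotting sequence, run in the right direction from $T(a,b)$ to $T(c,d)$, actually has crossing changes of the sign required — this is where the fact $u=g$ for torus knots, together with positivity of $T(c,d)$, must be used to rule out the wrong orientation. Once the correct positively self-intersecting concordance is in hand, the appeal to Theorem~\ref{prop:dinv} is immediate and the concluding arithmetic with the semigroups $\langle a,b\rangle$ and $\langle c,d\rangle$ is routine, amounting to the observation that the counting function of a numerical semigroup on an initial segment is controlled by its multiplicity.
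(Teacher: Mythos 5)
Your overall route is the paper's: extract from the minimal unknotting sequence a positively self-intersecting concordance from $K_0=T(a,b)$ to $K_1=T(c,d)$ with \level{} $p=g(T(c,d))-g(T(a,b))$, feed it into Theorem~\ref{prop:dinv}, translate via Lemma~\ref{lem:semiprop}(d) into a comparison of semigroup counting functions, and finish by comparing multiplicities. The preliminary bookkeeping (each knot $J_k$ in a minimal sequence has $u(J_k)=k$, $u=g$ for torus knots, signs of the crossing changes pinned down so that the concordance runs from $T(a,b)$ up to $T(c,d)$ with positive double points) is fine, and is in fact treated no more carefully in the paper itself.

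The gap is in the final step. You apply Theorem~\ref{prop:dinv} only at the single value $m=-g(K_0)$, which after translation gives the single inequality $\#\bigl(S_{c,d}\cap[0,2g(T(a,b)))\bigr)\le g(T(a,b))$; your ``purely number-theoretic'' conclusion then quietly invokes the inequality $\#\bigl(S_{c,d}\cap[0,m)\bigr)\le\#\bigl(S_{a,b}\cap[0,m)\bigr)$ at an ``appropriate threshold'' $m$ that you never derived. The one inequality you do have is genuinely too weak: for $T(a,b)=T(4,9)$ and $T(c,d)=T(3,14)$ one computes $\#\bigl(S_{3,14}\cap[0,24)\bigr)=12=\#\bigl(S_{4,9}\cap[0,24)\bigr)$, so your inequality at $m=2g(T(4,9))=24$ is satisfied even though $a=4>3=c$. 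The fix is exactly what the paper does: keep $m$ free, i.e.\ substitute $-m+g(K_0)$ for $m$ in Theorem~\ref{prop:dinv} to obtain $\#\bigl(S_{c,d}\cap[0,m)\bigr)\le\#\bigl(S_{a,b}\cap[0,m)\bigr)$ for \emph{every} $m\ge 0$, and then specialize to $m=c+1$, where the left-hand side equals $2$ while the right-hand side is $\ge 2$ if and only if $a\le c$.
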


\begin{proof}  Suppose that $T(a,b)$ is in such an unknotting sequence.  Then there is a singular concordance from  $T(a,b)$ to $T(c,d)$  having all positive double points.  The minimal number of crossing changes in an unknotting sequence for any torus knot $K$ is $g(K)$.  Thus, the  double point count is $ p = g(T(c,d)) - g(T(a,b))$.

  Let $K_0=T(a,b)$ and $K_1=T(c,d)$.  According to   Theorem~\ref{prop:dinv},  $$I_{K_1}(m+g(K_1)+p)\le I_{K_0}(m+g(K_0)).$$ 
  Replacing $m$ in this equation with $-m + g(K_0)$  and  using the fact that $p + g(K_0) = g(K_1)$, we have
  $$I_{K_1}(-m+2g(K_1) )\le I_{K_0}(-m+2g(K_0)).$$
  
As in the proof of Theorem~\ref{thm:main}, Lemma~\ref{lem:semiprop}(d)  can be applied to show that this inequality implies  that 
for any $m\in\Z$,
$$\#S_{c,d}\cap[0,m)\le\#S_{a,b}\cap[0,m),$$
where $S_{c,d}$ and $S_{a,b}$ are the semigroups associated to singular points with links $T(c,d)$ and $T(a,b)$, respectively.
  The value   $ \#S_{c,d} \cap [0,c+1) =2$;  $\#S_{a,b}\cap[0,c+1)\ge 2$ if and only if $a \le c$.  This completes the proof.  
\end{proof} 

\section{Examples and application}\label{examplesection}
\subsection{Semicontinuity of the semigroup and semicontinuity of the spectrum}\label{sec:ssandss}
The word ``semicontinuity" in singularity theory is  often used in the context of the semicontinuity of the spectrum, see \cite{St,Var}. A natural question
that arises is  whether the semicontinuity of semigroups is related to the semicontinuity of the spectrum. In general, the semicontinuity of the spectrum
is a stronger obstruction. For example,  fix coprime integers $p,q$ and   ask  what is the maximal possible $k$ such that the $(p;q)$ singularity
can be deformed (perturbed) 
to a $(2;2k+1)$ singularity. We refer to this question as the Christopher--Lynch problem, \cite{BZ3,ChLy}, and the conjecture
of \cite{ChLy} is that $k\le p+q-\intfrac{q}{p}-3$. The Milnor number gives $k\le\sim pq/2$, while the spectrum that $k\le \sim pq/4$.
Unfortunately, the semicontinuity of the semigroup also gives $k\le\sim pq/2$ (the symbol $\sim$ denotes an asymptotic estimate up
to linear terms in $p$ and $q$).

We  next present an example which shows that sometimes the semicontinuity of the semigroup provides an obstruction to the existence
of a given deformation (using Theorem~\ref{thm:main}, a $\delta$--constant deformation; \cite[Section 3.5]{GN} would obstruct
any analytic deformation), when
the semicontinuity of the spectrum fails to give an obstruction.
 
We ask  whether a singularity with characteristic sequence $(6;7)$ can be deformed (perturbed) to a singularity with characteristic sequence $(4;9)$
(in the notation from Section~\ref{sec:deform} the singularity of $F_0^{-1}(0)$ is supposed to be $(6;7)$; we ask if we can find $F_s$
such that $F_s^{-1}(0)$ has singularity $(4;9)$).
The Milnor numbers of these singularities are respectively $30$ and $24$. Below we shall discuss which criteria obstruct the existence of this such deformation. 

\smallskip
\textbf{A. The semigroup semicontinuity property.}
The semigroups $S_{6,7}$ for $T(6,7)$ is generated by $6$ and $7$; the semigroup $S_{4,9}$ of $T(4,9)$ is generated by $4$ and $9$. Consider $m=8$. We
have $S_{6,7}\cap [0,8)=\{0,6,7\}$ and $S_{4,9}\cap [0,8)=\{0,4\}$. Thus Theorem~\ref{thm:main} obstructs the existence of a $\delta$--constant
deformation.

\vskip-5mm
\begin{remark}
While Theorem~\ref{thm:main} obstructs the existence a $\delta$--constant deformation, the result of Gorsky and N\'emethi obstructs the existence of
any deformation. On the other hand, neither  result obstructs the existence of a smooth genus $3$ cobordism betweeen $T(6,7)$ and $T(4,9)$, because 
the results of Gorsky and N\'emethi hold only in the analytic case, while our theorem obstructs the existence of a positively self--intersecting concordance
with double point count $3$.
\end{remark}

\vskip -5mm
\textbf{B. The semicontinuity of the spectrum.}
The spectra of $x^6-y^7=0$ and $x^4-y^9=0$ can be computed from a general formula \cite{Sait}
or from the Thom--Sebastiani formula;  see \cite[Theorem 5.31]{Zo}. In \eqref{eq:spec} we present the spectra in the following joint list of rational numbers. The elements in
the spectrum of $x^6-y^7=0$ are written in normal fonts, and the element  of the spectum $x^4-y^9=0$ are written in boldface. 
It is a straightforward calculation using this list that each interval of type\nopagebreak[4] $(x,x+1)$,
\nopagebreak[4] $x\in\R$, contains at least as many non-boldfaced elements as boldfaced.
In other words, the spectrum does not obstruct the existence of deformation of $\{x^6-y^7=0\}$ to $\{x^4-y^9=0\}$.

\goodbreak
\begin{equation}\label{eq:spec}
\begin{array}{rrrrrrrr}
13/42 &  \textbf{13/36} &  19/42 &  \textbf{17/36} &   10/21 &   \textbf{7/12} &   25/42 &   \textbf{11/18} \\   
13/21 & 9/14 &  \textbf{25/36} & \textbf{13/18} &   31/42 &   16/21 &   11/14 &   \textbf{29/36} \\ 
17/21  & \textbf{5/6} &   \textbf{31/36} &   37/42 &   19/21 &   \textbf{11/12} & 13/14 &   \textbf{17/18} \\   
20/21 & \textbf{35/36} &   41/42 &   43/42 &   \textbf{37/36} &   22/21 &   \textbf{19/18} &   15/14 \\
\textbf{13/12} & 23/21 &   47/42 &   \textbf{41/36} &   \textbf{7/6} &  25/21 &   \textbf{43/36} &   17/14 \\
26/21 &   53/42 &   \textbf{23/18} &   \textbf{47/36} & 19/14 &  29/21 &   \textbf{25/18} &  59/42 \\
\textbf{17/12} & 32/21 &  \textbf{55/36} &  65/42 &  \textbf{59/36} &  71/42.
\end{array}
\end{equation}

\smallskip
\textbf{C. The $\Upsilon$ function of Ozsv\'ath, Szab\'o and Stipsicz.}

In \cite{OSS} a function called $\Upsilon_K(t)\colon[0,2]\to\R$ was associated to any knot $K\subset S^3$. It is derived from the knot Heegaard Floer homology
and it is shown to be a smooth concordance invariant. It satisfies the symmetry property $\Upsilon_K(2-t)=\Upsilon(t)$; for ease of exposition we will restrict to
the interval $[0,1]$.

The results of \cite{OSS} imply that $\Upsilon$ can be used to study positive self--intersecting concordance. More precisely, 
\cite[Proposition 1.10 and Theorem 1.11]{OSS} yield the following result.

\begin{theorem}\label{thm:oss1}
Let $K_0$ and $K_1$ be two knots in $S^3$. Suppose there is a positively self--intersecting concordance from $K_0$ to $K_1$ with double point count $p$. Then,
for any $t\in[0,1]$ we have.
\[\Upsilon_{K_1}(t)\le \Upsilon_{K_0}(t)\le \Upsilon_{K_1}(t)+p\cdot t.\]
\end{theorem}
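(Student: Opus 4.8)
The plan is to deduce Theorem~\ref{thm:oss1} directly from the two cited results of \cite{OSS}: first, the behavior of $\Upsilon$ under negative-definite cobordisms / crossing changes, and second, its concordance invariance. The conceptual point is that a positively self--intersecting concordance from $K_0$ to $K_1$ with double point count $p$ is, up to concordance, the composition of $p$ elementary moves, each of which either changes a negative crossing to a positive crossing or is a concordance. Concretely, recall from the discussion after Example~\ref{ex:cogo} that a single positive double point can be traded (after concordance on both ends) for a single crossing change turning a negative crossing into a positive one; iterating, one finds knots $K_0=J_0, J_1,\ldots,J_p$ with $J_i$ concordant to a knot obtained from (a knot concordant to) $J_{i-1}$ by one such crossing change, and $J_p$ concordant to $K_1$.

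First I would record the two needed inputs from \cite{OSS}. The concordance invariance of $\Upsilon$ gives $\Upsilon_{J} = \Upsilon_{J'}$ whenever $J$ and $J'$ are concordant, so along the chain above only the crossing-change steps matter. The crossing-change inequality from \cite[Proposition 1.10 and Theorem 1.11]{OSS} states that if $K_+$ is obtained from $K_-$ by changing one negative crossing to a positive one, then for all $t\in[0,1]$,
\[
\Upsilon_{K_+}(t)\ \le\ \Upsilon_{K_-}(t)\ \le\ \Upsilon_{K_+}(t) + t.
\]
Here I am using the orientation conventions of \cite{OSS}; a brief sanity check against a known example (e.g.\ passing from the unknot to the positive trefoil by one such move, where $\Upsilon$ is known) fixes the direction of the inequality.

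Next I would chain these estimates. Since each of the $p$ elementary moves contributes, after absorbing the intervening concordances, a shift of $\Upsilon$ that is nonnegative and bounded above by $t$, a telescoping argument gives
\[
\Upsilon_{K_1}(t)\ \le\ \Upsilon_{K_0}(t)\ \le\ \Upsilon_{K_1}(t) + p\cdot t
\]
for all $t\in[0,1]$, which is exactly the assertion. Finally, the symmetry $\Upsilon_K(2-t)=\Upsilon_K(t)$ extends the inequality from $[0,1]$ to $[0,2]$, though as remarked we only state it on $[0,1]$.

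The main obstacle I anticipate is not the algebra of the telescoping but making the reduction "a single positive double point $=$ concordance $+$ one crossing change" rigorous and, relatedly, checking that the relevant statements in \cite{OSS} are phrased for crossing changes (or for the cobordisms they induce) in precisely the generality we need — in particular that the bound degrades by exactly $t$ per double point and that the sign conventions match the positivity of the double points here. An alternative, perhaps cleaner, route that avoids the crossing-change bookkeeping is to feed the negative-definite cobordism $W_q$ of Proposition~\ref{prop:Wconstruct} (or rather the trace of the immersed annulus after blow-ups) directly into the cobordism-form of the $\Upsilon$ inequality in \cite{OSS}, if such a form is available there; I would check which formulation \cite{OSS} actually provides and use whichever makes the citation cleanest.
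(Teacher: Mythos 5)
Your proposal is correct and follows exactly the route the paper intends: the paper gives no written proof of Theorem~\ref{thm:oss1}, asserting only that it follows from \cite[Proposition 1.10 and Theorem 1.11]{OSS}, and your reduction to concordances interleaved with negative-to-positive crossing changes, followed by concordance invariance and the telescoped crossing-change inequality, is precisely that derivation. The one step you rightly flag as needing care --- trading each positive double point for a crossing change up to concordance --- is exactly the claim the authors themselves assert without proof in the introduction for the case $p=1$, so your write-up is on the same footing as theirs.
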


Another feature of $\Upsilon$ is that for algebraic knots (more generally, for all $L$--space knots) it can be computed from the Alexander polynomial. 
The computations in \cite[Theorem~1.15]{OSS} can be easily reformulated in terms of the semigroup of the singular point. The resut is as follows.

\begin{proposition}\label{prop:oss2}
Let $K$ be an algebraic knot with genus $g$ and $S$ the corresponding semigroup. Then for any $t\in[0,1]$ we have
\[\Upsilon_K(t)=-2\!\!\!\!\!\!\!\min_{m\in\{0,\ldots,2g\}}\left(\#S\cap[0,m)+\frac{t}{2}(g-m)\right).\]
\end{proposition}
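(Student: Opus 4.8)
The plan is to start from the computation of $\Upsilon$ for $L$--space knots in \cite[Theorem~1.15]{OSS} and to translate its output into semigroup language; no new topology is needed. Recall the classical identity, valid for any algebraic knot $K$ (equivalently, for any unibranched singularity), between the Alexander polynomial and the generating function of the semigroup,
\[\Delta_K(t)=(1-t)\sum_{s\in S}t^s.\]
Since $S$ is cofinite in $\Z_{\ge0}$ the right--hand side is a genuine polynomial, and comparing coefficients shows that the exponents and signs of $\Delta_K$ record precisely the pattern of elements and gaps of $S$. From this one recovers the standard formula for the torsion coefficients: if $\Delta_K(t)=\sum_i a_it^i$ and $t_j(K):=\sum_{i\ge1}i\,a_{j+i}$, then $t_j(K)=I_K(g+j)=\#\{x\notin S:\ x\ge g+j\}$, where $I_K$ is the gap function of Section~\ref{sec:surg}. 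Finally, using Lemma~\ref{lem:semiprop}(c),(d) (so the number of gaps is $g$ and $\mu=2g$), one obtains the elementary identity $\#S\cap[0,m)=m-g+I_K(m)$ for all $m\in\Z$.

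Next I would import \cite[Theorem~1.15]{OSS}: for an $L$--space knot $K$, the function $\Upsilon_K$ is, on $[0,1]$, the upper envelope of a finite family of affine functions of $t$ whose slopes and intercepts are read off from the staircase complex of $K$, which in turn is determined by the exponents of $\Delta_K$. Using the dictionary above, each vertex of the staircase corresponds to an integer $m\in\{0,\dots,2g\}$, and the affine function attached to it is exactly
\[t\longmapsto -2\Bigl(\#S\cap[0,m)+\frac{t}{2}(g-m)\Bigr)=t(m-g)-2\,\#S\cap[0,m).\]
Hence $\Upsilon_K(t)$ equals the maximum of these affine functions over the staircase vertices.

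It remains to see that enlarging the index set from the staircase vertices to all of $\{0,\dots,2g\}$ does not change the maximum. This is a standard convexity fact: the upper envelope of a family of lines $t\mapsto at+b$ depends only on the vertices of the upper convex hull of the corresponding point set $\{(a,b)\}$, every other line being dominated by a convex combination of two of them. Here the points are $(m-g,\,-2\,\#S\cap[0,m))$ for $m=0,\dots,2g$: the first coordinates are the consecutive integers $-g,\dots,g$, and passing from $m$ to $m+1$ drops the second coordinate by $2[m\in S]$; the hull vertices are exactly the upper convex corners of this sequence, which are precisely the staircase vertices of $K$. Therefore the maximum over all $m\in\{0,\dots,2g\}$ agrees with the maximum over the staircase vertices, and rewriting the result as $-2\min_m$ gives the claimed formula. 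The only delicate point in the whole argument is conventions: one must match the normalization of $\Upsilon$ in \cite{OSS} (the overall sign, the factor $2$, and the reduction from the domain $[0,2]$ to $[0,1]$ via the symmetry $\Upsilon_K(2-t)=\Upsilon_K(t)$) with the normalization of $\Delta_K$ used above, so that the staircase data line up with the affine functions $t(m-g)-2\,\#S\cap[0,m)$ exactly; once this is pinned down, everything else is the routine algebra indicated here.
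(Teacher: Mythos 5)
Your proposal is correct and follows the same route the paper intends: the paper presents this proposition as a direct reformulation of \cite[Theorem~1.15]{OSS}, and you carry out exactly that reformulation, via the identity $\Delta_K(t)=(1-t)\sum_{s\in S}t^s$ relating the Alexander polynomial to the semigroup, together with the convexity observation that the minimum over all $m\in\{0,\ldots,2g\}$ is already attained at the staircase vertices (the $m$ with $m\in S$, $m-1\notin S$, which are the upper convex-hull corners of the points $(m-g,-2\#S\cap[0,m))$). The one genuinely delicate point --- matching the normalization of \cite{OSS} so that the staircase data produce precisely the affine functions $t(m-g)-2\,\#S\cap[0,m)$ --- is one you have correctly identified and isolated.
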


For the knots $K_0=T(4,9)$ and $K_1=T(6,7)$ we easily compute
\[
\Upsilon_{T(4,9)}(t)=\begin{cases} -12t& t\in[0,\frac12]\\ -4t-4&t\in[\frac12,1]\end{cases}\;\;\;\;\;
\Upsilon_{T(6,7)}(t)=\begin{cases} -15t& t\in[0,\frac13]\\ -9t-2&t\in[\frac13,\frac23]\\ -3t-6&t\in[\frac23,1].\end{cases}
\]

Therefore we have
\[\delta(t):=\Upsilon_{T(4,9)}(t)-\Upsilon_{T(6,7)}(t)=
\begin{cases}
3t&t\in[0,\frac13]\\
-3t+2&t\in[\frac13,\frac12]\\
5t-2&t\in[\frac12,\frac23]\\
-t+2&t\in[\frac23,1].
\end{cases}
\]
We have $0\le \delta(t)\le 3t$ for $t\in[0,1]$; see Figure~\ref{fig:deltagraph}. The conclusion is that the Ozsv\'ath--Szab\'o--Stipsicz
criterion \emph{does not} obstruct the existence of a positively self--intersecting concordance from $T(4,9)$ to $T(6,7)$ with double point count $3$.
That is, it \emph{does not} obstruct the existence of a $\delta$--constant deformation of a singularity $(6;7)$ to a singularity $(4;9)$.
It follows that the  $\Upsilon$ function gives in general \emph{different} obstructions than the $d$--invariants of large surgeries.

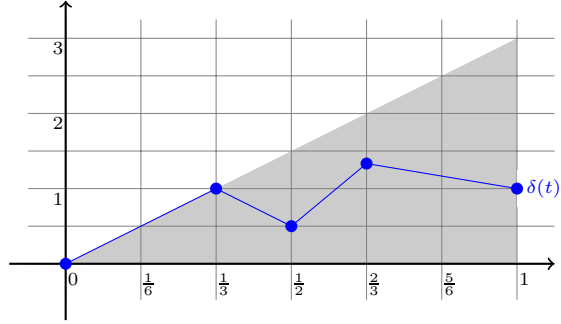
\begin{figure}
\begin{tikzpicture}
\fill[black!20!white] (0,0)--(6,0) -- (6,3) -- (0,0);
\draw[xstep=1,ystep=0.5 ,gray, very thin](-0.5,-0.5) grid (6.5,3.25);
\draw[very thick, white] (-0.5,-0.5) -- (7,-0.5);
\draw[->,thick] (0,-0.75) -- (0,3.5);
\draw[->,thick] (-0.75,0) -- (6.5,0);
\draw[blue] (0,0) -- (2,1) -- (3,0.5) -- (4,1.3333) -- (6,1) node [right, fill=white] {\scriptsize $\delta(t)$};
\fill[blue] (0,0) circle [radius=0.08];
\fill[blue] (2,1) circle [radius=0.08];
\fill[blue] (3,0.5) circle [radius=0.08];
\fill[blue] (4,1.3333) circle [radius=0.08];
\fill[blue] (6,1) circle [radius=0.08];
\draw (0.1,0) node[below] {\scriptsize $0$};
\draw (1.1,0) node[below] {\scriptsize $\frac16$};
\draw (2.1,0) node[below] {\scriptsize $\frac13$};
\draw (3.1,0) node[below] {\scriptsize $\frac12$};
\draw (4.1,0) node[below] {\scriptsize $\frac23$};
\draw (5.1,0) node[below] {\scriptsize $\frac56$};
\draw (6.1,0) node[below] {\scriptsize $1$};
\draw (-0.1,1) node[below=-2pt] {\scriptsize $1$};
\draw (-0.1,2) node[below=-2pt] {\scriptsize $2$};
\draw (-0.1,3) node[below=-2pt] {\scriptsize $3$};
\end{tikzpicture}
\caption{The graph of function $\delta=\Upsilon_{T(4,9)}-\Upsilon_{T(6,7)}$. The shaded region is the region $\{(x,y)\colon 0\le y\le 3x\}$.}\label{fig:deltagraph}
\end{figure}

\smallskip\goodbreak
\textbf{D. Other criteria.}\vskip.05in 

\underline{D1.} In  \cite[Theorem 1.6]{Bosing}, obstructions of a different type than those studied here were developed to obstruct the existence of deformations.  However, one can check that these do not obstruct there being a deformation of $(6;7)$ to $(4;9)$. 
\vskip.05in 

\underline{D2.} A conjectural semicontinuity of the $\ol{M}$-number, see \cite{Bo,Or}, 
says that $\ol{M}_{6,7}$ should be greater than $\ol{M}_{4,9}$; otherwise a deformation does not exists. But 
$\ol{M}_{6,7}=7+6-\intfrac{7}{6}-3=9$, $\ol{M}_{4,9}=4+9-\intfrac{9}{4}-3=8$. So the semicontinuity of $\ol{M}$ numbers would not
obstruct that deformation.\vskip.05in 

\underline{D3.} We can also try to obstruct deformations of $(6;7)$ to $(4;9)$ in an explicit way. We search for a 
deformation of the form $x_s(t)=a_4(s)t^4+a_5(s)t^5+\ldots$, $y_s(t)=b_4(s)t^4+b_5(s)t^5+\ldots$, where $a_i(s)$
and $b_j(s)$ are coefficients depending smoothly on a deformation parameter $s$,
compare \cite[Section 2]{Bosing}, where $a_4(s)$ vanishes only at $s=0$ and $a_5(0)=0$. We require that for $s\neq 0$ the polynomials parametrize a $(4;9)$
singularity, and for $s=0$ they parametrize an $(6;7)$ singularity. We will show that this is impossible. Assuming
that for $s\neq 0$ the singularity has type $(4;9)$, we explicitly write the Puiseux expansion
\[y_s(t)=c_4(s)x_s+c_8(s)x_s^2+c_9(s)x_s^{9/4}+\ldots.\]
Observe that either $a_6(0)\neq 0$, or $a_7(0)\neq 0$, for otherwise the singularity cannot have type $(6;7)$. It follows that 
$c_4(s)$ extends continuously to $0$. In fact, $b_6(s)=c_4(s)a_6(s)$ and $a_6,b_6$ are continuous at $s=0$ (and $b_7(c)=c_4(s)a_6(s)$). 
Thus we may replace $y_s$ by $y_s-c_4(s)x_s$
and assume that $c_4(s)\equiv 0$. It follows that for $s\neq 0$, the function $y_s(t)$ must vanish at $t=0$ up to order at least $8$. Thus $y_0(t)$
must also vanish at $t=0$ up to order at least $8$. This contradicts the assumption that $(x_0(t),y_0(t))$ parametrizes a $(6;7)$ singularity.
This approach relies on the analytic parametrization.

\subsection{Examples with many singular points}\label{sec:many}

Here we provide applications of Theorem~\ref{thm:main} in the case $n\ge 2$, that is, in the case not covered by the result of Gorsky and N\'emethi.

\begin{example}
For any $r>0$, there does not exist a $\delta$--constant deformation in which the central fiber has singularity   type $(6;8,2r+7)$ and the generic fiber has two cuspidal
singularities with multiplicities $4$ and $5$ respectively.

Indeed, if it existed, we choose $m=9$, $m_1=4$, $m_2=5$. 
We let $S_0$  denote the semigroup of the central singularity,   $S_1$  the semigroup of the singularity with multiplicity
$4$,  and   $S_2$   the  semigroup of the singularity with multiplicity $5$. With these choices we have
\[\#S_0\cap[0,m)\ge 3,\]
because $\{0,6,8\}\in S_0$. On the other hand $S_1\cap[0,4)=S_2\cap[0,5)=\{0\}$, so the inequality in Theorem~\ref{thm:main} is violated.
\end{example}

This obstruction is of a different nature than the obstruction coming from semicontinuity of spectra. In fact, if we fix the type of the singular points in the
generic fiber to be $(4,a)$ and $(5,b)$ for some $a,b$ satisfying $\gcd(4,a)=\gcd(5,b)=1$, $a>4$ and $b>5$, then for large $s$, neither the genus bound nor
the spectrum semicontinuity will give an obstruction.

\begin{remark}\label{rem:semimult}
We sketch an elementary analytic argument obstructing the above deformation. Namely, if the deformation is $\delta$--constant, we can find a family of parametrizations
$(x_s(t),y_s(t))$, where $t\mapsto (x_0(t),y_0(t))$ locally parametrizes  a $(6;8,2r+7)$ singularity and  $t\mapsto (x_s(t),y_s(t))$ for $s\neq 0$ parametrizes
a curve having two singular points of types $(4;a)$ and $(5;b)$. Comparing the total number of zeros of the derivative 
$\frac{\partial x_s(t)}{\partial t}$ for $s=0$ and $s\neq 0$,
we arrive at the inequality $(6-1)\ge (4-1)+(5-1)$. This reasoning can be generalized to the case when the deformation is not $\delta$--constant,
but the final inequality is weaker (the genus
of the non-central fiber enters the inequality) and does not obstruct, in general, the deformation.

Notice that the above argument does not work in the smooth setting.
\end{remark}

There are more complicated deformations, which cannnot be obstructed by a ``semicontinuity of the multiplicities'' argument as in Remark~\ref{rem:semimult}.

\begin{example}
Consider a singularity  $(10;12,2r+13)$ for $r>0$. It cannot be deformed to a pair singularities $(5,6)$ and $(5,61)$ for a $\delta$--constant
deformation. In fact, we take $m=61$ in Theorem~\ref{thm:main}. 
One readily computes that the semigroup for $(10;12,2r+13)$ contains at least $21$ elements in range $[0,61)$.
We take $m_1=1$ and $m_2=60$, then there is $1$ element in the semigroup of $(5;6)$ in range $[0,1)$ and $12$ elements in the semigroup of the singularity $(5;61)$
in range $[0,60)$ so the obstruction applies and the deformation does not exist. 

Again for $r$ sufficiently large the semicontinuity of the spectrum does not obstruct the deformation.
\end{example}

\smallskip

\end{document}